\newtheorem{theorem}{Theorem}[section]
\newtheorem{remark}{Remark}[section]
\newtheorem{corollary}{Corollary}[section]
\newtheorem{definition}{Definition}[section]
\newtheorem{lemma}{Lemma}[section]
\newtheorem{proposition}{Proposition}[section]
\newtheorem{example}{Example}[section]
\begin{document}
	\title{A Class of Quasi-Variational Inequalities with Unbounded Constraint Maps: Existence Results and Applications}
	\author{
		Asrifa Sultana\footnotemark[1] \footnotemark[2] , Shivani Valecha\footnotemark[2] }
	\date{ }
	\maketitle
	\def\thefootnote{\fnsymbol{footnote}}
	
	\footnotetext[1]{ Corresponding author. e-mail- {\tt asrifa@iitbhilai.ac.in}}
	\noindent
	\footnotetext[2]{Department of Mathematics, Indian Institute of Technology Bhilai, Raipur - 492015, India.
	}
	%\author{Asrifa Sultana\footnotemark[1] , Shivani Valecha}
	%\address{Department of Mathematics, Indian Institute of Technology Bhilai, Raipur - 492015, India}
	%\date{}
	
	%\begin{document}
	
	%\maketitle
	%\footnotetext[1]{
		%Department of Mathematics, Indian Institute of Technology Bhilai, Raipur - 492015, India\\
		
		%}
	\begin{abstract}
	The quasi-variational inequalities play a significant role in analyzing a wide range of real-world problems. However, these problems are more complicated to solve than variational inequalities as the constraint set is based on the current point. We study a class of quasi-variational inequality problems whose specific structure is beneficial in finding some of its solutions by solving a corresponding variational inequality problem. Based on the classical existence theorem for variational inequalities, our main results ensure the occurrence of solutions for the aforementioned class of quasi-variational inequalities in which the associated constraint maps are (possibly) unbounded. We employ a coercivity condition which plays a crucial role in obtaining these results. Finally, we apply our existence results to ensure the occurrence of equilibrium for the pure exchange economic problems and the jointly convex generalized Nash games.\color{black}
\end{abstract}

{\bf Keywords:} variational inequality; generalized Nash game; competitive equilibrium problem; pure exchange economy; constraint map

{\bf Mathematics Subject Classification:} 49J40, 49J53, 90C26, 91B42
\section{Introduction}

The theory of variational inequalities, originated by Stampacchia \cite{stampacchia}, provides us with a consolidated structure to analyze a wide range of unrelated problems emerging in economics, mechanics and applied mathematics. The variational inequality (VI) where the constraint set relies on the current point is known as quasi-variational inequality (QVI). Assume that $T:\mathbb{R}^n\rightrightarrows \mathbb{R}^n$ and $K:D\rightrightarrows D$ are multi-valued maps where $D\subseteq \mathbb{R}^n$ is non-empty. Then the QVI problem denoted by $QVI(T,K)$ is to determine an element $\bar x\in K(\bar x)$ so that:
\begin{equation}\label{QVI}
	\exists\,\bar x^*\in T(\bar x) ~\text{satisfying}~\langle\bar x^*,z-\bar x\rangle\geq 0,\enspace \text{for all}~z\in K(\bar x).
\end{equation}
In particular, the problem $QVI(T,K)$ becomes a variational inequality problem $VI(T,D)$ if we consider that $K$ is a constant map. %$VI(T,D)$ if $K(x)=D$ for all $x\in D$.  

The aforementioned concept of QVI problems was initiated by Chan-Pang \cite{chan}. Since then, these problems have been studied intensively due to their applications in quasi-optimization problems \cite{ausselQVI}, generalized Nash equilibrium problems \cite{ausselstra, facc}, traffic network problems \cite{ausselQVI} and economic equilibrium problems \cite{donato, milasi, wets}. Some remarkable results that ensure the occurrence of solutions for $QVI(T,K)$ include a preliminary result by Tan \cite{tan}, which is proved under the upper semi-continuity assumption on the map $T$ and a recent result by Aussel-Cotrina \cite{aussel-cotr}, which is proved under the generalized monotonicity and weaker continuity assumptions on $T$. Recently, Aussel-Sultana \cite{ausselcoer} extended the results in \cite{aussel-cotr} to the case where the constraint map $K$ (possibly) admits unbounded values. It is essential to emphasize that QVI problems are more complicated to solve computationally than variational inequalities since the constraint set is based on the current point. This fact motivated Aussel-Sagratella \cite{ausselstra} to define a special class of QVI problems having reproducible constraint maps, for which the entire set of solutions can be determined by solving associated variational inequalities.

Furthermore, Aussel et. al. \cite{ausselradn} identified another class of QVI problems whose specific structure is beneficial in finding some of its solutions by solving a corresponding VI problem. In particular, the following class of QVI, denoted by $QVI(\ref{eqA})$ is considered by them: for a given function $g:\mathbb{R}^n\rightarrow \mathbb{R}^m$ and the multi-valued maps $G:\mathbb{R}^n\rightrightarrows \mathbb{R}^n$ and $M:P\rightrightarrows \mathbb{R}^n$ where $P\neq \emptyset$ is a set in $\mathbb{R}^m$, the problem $QVI(\ref{eqA})$ corresponds to,
\begin{align}
	&~\text{find}~ (\bar p,\bar y)\in P\times M(\bar p)~\text{such that}~ \exists\, \bar y^*\in G(\bar y)~\text{satisfying}\nonumber\\
	\tag{A}&\langle g(\bar y),p-\bar p\rangle + \langle \bar y^*, z-\bar y \rangle \geq 0,\enspace\text{for any}~(p,z)\in P\times M(\bar p).\label{eqA}
\end{align} 
It is noticeable that these problems were earlier considered by Donato-Milasi-Vitanza \cite{donato, milasi} in connection to the economic equilibrium problems. The primary motivation in \cite{ausselradn} for studying these problems is to check the occurrence of Radner equilibrium for a sequential trading model with uncertainty. The authors in \cite{ausselradn} proved the existence results for $QVI(\ref{eqA})$ under the weak continuity and generalized monotonicity assumptions on the map $G$, by considering that the constraint map $M$ admits bounded values. However, their assumption over $M$ is too restrictive for certain economic applications, for e.g., the feasible strategy sets of the players in economic games are not always bounded, as illustrated in Section \ref{applications}. 

The objective of this article is to check the solvability of the specific class of quasi-variational inequalities $QVI(\ref{eqA})$, in which the constraint map $M$ (possibly) admits unbounded values. On the account of specific structure of the considered quasi-variational inequalities, we establish our main existence results for such problems $QVI(\ref{eqA})$ having non-compact valued constraint maps, by employing a preliminary existence result on variational inequalities. In this case, the existence results are proved under the weak continuity and generalized monotonicity assumptions on the map $G$. Additionally, considering the fact \textcolor{black}{that the} given problem $QVI(\ref{eqA})$ is a particular instance \textcolor{black}{of classical QVI} (as illustrated in Section \ref{alternative}), an alternate existence result is derived for the considered problems. In particular, the results derived in this paper extend the existence theorems due to Aussel et al. \cite{ausselradn} to the case where constraint map admits unbounded values. Finally, we show the existence results derived by us find their applicability in establishing the existence of equilibrium for the pure exchange economic problems \color{black}in which the consumption sets are not necessarily bounded below \cite{donato, milasi,page} \color{black} and the jointly convex generalized Nash games \cite{facc}.

This article is arranged in the following sequence: Section \ref{preliminaries} consists of some key results and supporting definitions. In the Subsection \ref{coercivity}, we have first defined a coercivity condition in context to the considered problem $QVI(\ref{eqA})$ and by employing this coercivity condition, we establish various existence theorems for $QVI(\ref{eqA})$ in Subsection \ref{mainresult} and Subsection \ref{alternative}. Finally, Section \ref{applications} illustrates two applications of the theorems established by us: First, we derive the occurrence of equilibrium for a pure exchange economy in Subsection \ref{application1} and second, we derive the occurrence of equilibrium for a jointly convex generalized Nash game in Subsection \ref{application2}.

\section{Preliminaries}\label{preliminaries}
The definitions and some basic results discussed in this section will be useful for deriving the existence results in upcoming sections. For any set $A\subset \mathbb{R}^n$, we denote the interior of $A$ by $int A$ and the convex hull of $A$ by $co(A)$. Furthermore, $T:\mathbb{R}^n\rightrightarrows \mathbb{R}^m$ represents a multi-valued map, that is, $T(x)$ is some set in $\mathbb{R}^m$ for any given $x\in \mathbb{R}^n$. For the definitions of upper and lower semi-continuity of map $T$, the reader may refer \cite{aubin, kurt}. 

We now recall the concept of upper sign-continuity \cite{hadji} which is weaker when compared to upper semi-continuity criterion. For a given convex subset $K$ of $\mathbb{R}^n$, a map $T:\mathbb{R}^n\rightrightarrows \mathbb{R}^n$ is known to be upper sign-continuous over $K$ if for every $u,v\in K$ and $u_s=su+(1-s)v$ we have, %where $s\in(0,1)$ following implication holds,
$$\text {for all}~s\in(0,1),~ \inf_{u_s^*\in T(u_s)}\langle u_s^*, v-u\rangle\geq 0~\Rightarrow~ \sup_{u^*\in T(u)}\langle u^*, v-u\rangle \geq 0.$$
The notion of upper sign-continuity, initiated in \cite{hadji}, was further refined in \cite{aussel-hadj} to local upper sign-continuity. A map $T$ is known to be \textit{locally upper sign-continuous} at some $u\in \mathbb{R}^n$ if there is some convex open set $\mathcal O_u$ containing $u$ and there exists a map $\psi_u:\mathcal O_u\rightrightarrows \mathbb{R}^n$ such that $\psi_u$ meets upper sign-continuity property over $\mathcal O_u$ and $\psi_u(\tilde u)$ is non-empty convex compact subset of $T(\tilde u)\setminus \{0\}$ for any $\tilde u\in \mathcal O_u$. 

A map $T$ is known to be dually lower semi-continuous over $K\subseteq\mathbb{R}^n$ if for each $v\in K$ and any sequence $\{u_n\}_{n\in \mathbb{N}}$ in $K$ with $u_n\rightarrow u$, we have,
$$ \liminf_n \sup_{u_n^*\in T(u_n)} \langle u_n^*, v-u_n\rangle \leq 0 \Rightarrow \sup_{u^*\in T(u)} \langle u^*,v-u\rangle\leq 0.$$
%The notion of local upper sign-continuity along with dual lower semi-continuity is frequently applied in \cite{ausselcoer, ausselQVI} to obtain some useful existence results under the generalized monotonicity.

Following definitions are concerned with various types of generalized monotonicity \cite{aussel-cotr} for a multi-valued map $T:\mathbb{R}^n\rightrightarrows \mathbb{R}^n$. It is considered to be:
\begin{itemize}
	\item[-] pseudomonotone over $K\subseteq \mathbb{R}^n$ if for every $v,w\in K$ we have,
	$$ \exists\,v^*\in T(v)~\text{s.t.}~\langle v^*,w-v\rangle \geq 0\Rightarrow \langle w^*,w-v\rangle \geq 0,\enspace \text{for all}~ w^*\in T(w);$$
	\item[-] quasimonotone over $K\subseteq \mathbb{R}^n$ if for every $v,w\in K$ we have,
	$$ \exists\,v^*\in T(v)~\text{s.t.}~\langle v^*,w-v\rangle > 0\Rightarrow \langle w^*,w-v\rangle \geq 0,\enspace \text{for all}~ w^*\in T(w).$$ 
\end{itemize}
It is obvious that any pseudomonotone map fulfils quasimonotonicity property also. But the converse can be disproved by considering an example of the map $T:\mathbb{R}\rightrightarrows\mathbb{R}$ defined as $T(v)=\{v^2\}$. Clearly, $T$ is quasimonotone on the set $[-1,1]$ but not pseudomonotone.

We now recall some preliminary concepts related to solution set for variational inequality problems \cite{aussel-cotr}. Assume that $T:\mathbb{R}^n \rightrightarrows \mathbb{R}^n$ is a multi-valued map. For a given non-empty set $K \subset \mathbb{R}^n$, we use $VI(T,K)$ and $VI^*(T,K)$, respectively, to denote the solution set and star solution (or non-trivial solution) set of Stampacchia VI problem. These sets are given as follows:
\begin{align*}
	&VI(T,K)=\{\tilde u\in K|\,\exists\, \tilde u^*\in T(\tilde u)~\text{satisfying}~\langle\tilde u^*,v-\tilde u\rangle \geq 0,\,\forall\, v\in K\},\\
	VI&^*(T,K)=\{\tilde u\in K|\,\exists\, \tilde u^*\in T(\tilde u)\setminus \{0\}~\text{satisfying}~\langle\tilde u^*,v-\tilde u\rangle \geq 0,\,\forall\, v\in K\}.
\end{align*}
Further, we use $MVI(T,K)$ to denote the solution set of Minty VI problem, which is given as follows:
$$MVI(T,K)=\{\tilde u\in K|\,\langle v^*,v-\tilde u\rangle\geq 0,\,\forall\, v\in K,\forall\, v^*\in T(v)\}.$$
One can easily verify that $MVI(T,K)$ is a closed convex set if $K\subseteq\mathbb{R}^n$ is closed and convex. Furthermore, $VI(T,K)\subseteq MVI(T,K)$ if the map $T$ fulfils pseudomonotonicity property.
Following result which ensures the non-emptiness of solution sets is obtained by combining \cite[Lemma 2.1]{aussel-hadj} and \cite[Proposition 2.1]{aussel-hadj}:
\begin{lemma}\cite{aussel-hadj}\label{lemmaNE}
	Consider a map $T:\mathbb{R}^n\rightrightarrows \mathbb{R}^n$ and $K\neq\emptyset$ is convex compact subset of $\mathbb{R}^n$. Then,
	\begin{itemize}
		\item[(a)] $MVI(T,K)\neq \emptyset$ if $T$ is pseudomonotone over $K$;
		\item[(b)] $VI^*(T,K)\neq\emptyset$ if $T$ is quasimonotone and locally upper sign-continuous over $K$. 
	\end{itemize}
\end{lemma}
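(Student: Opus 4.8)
The plan is to prove both parts through a Ky Fan (KKM-type) intersection argument, organized around the passage between the Minty and Stampacchia formulations.

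For part (a), I would first write the Minty solution set as an intersection of closed slices: for each $v\in K$ set
\[ Q(v)=\{u\in K:\langle v^*,v-u\rangle\geq 0 \text{ for all } v^*\in T(v)\}, \]
so that $MVI(T,K)=\bigcap_{v\in K}Q(v)$. Each $Q(v)$ is an intersection of half-space-type conditions inside the compact set $K$, hence closed and compact. The crux is that the family $\{Q(v)\}_{v\in K}$ has the finite intersection property, which I would obtain from the KKM lemma. To check the KKM condition I would introduce the auxiliary Stampacchia-type map $P(v)=\{u\in K:\exists\,u^*\in T(u) \text{ with } \langle u^*,v-u\rangle\geq 0\}$ and verify directly that $P$ is a KKM map: if $u=\sum_i\lambda_i v_i$ belonged to no $P(v_i)$, then for a fixed $u^*\in T(u)$ we would have $\langle u^*,v_i-u\rangle<0$ for every $i$, whence $\sum_i\lambda_i\langle u^*,v_i-u\rangle<0$; but this sum equals $\langle u^*,u-u\rangle=0$, a contradiction. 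Pseudomonotonicity is then used in exactly one place, namely to get $P(v)\subseteq Q(v)$ for every $v$, which transfers the KKM property from the (non-closed) map $P$ to the closed-valued map $Q$. Ky Fan's lemma applied to $Q$ yields $\bigcap_{v}Q(v)\neq\emptyset$.

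For part (b), I would split the argument into a bridge step and an existence step. The bridge step uses local upper sign-continuity: if $\bar u$ is a Minty solution, then applying the upper sign-continuity of the selection $\psi_{\bar u}$ along the segments $s\bar u+(1-s)v$ gives $\sup_{\bar u^*\in T(\bar u)}\langle\bar u^*,v-\bar u\rangle\geq 0$ for each $v\in K$, and a minimax argument over the nonempty convex compact value $\psi_{\bar u}(\bar u)\subseteq T(\bar u)\setminus\{0\}$ produces a single \emph{nonzero} $\bar u^*$ valid for all $v$, placing $\bar u$ in $VI^*(T,K)$. Here the $\setminus\{0\}$ built into the definition of $\psi_{\bar u}$ is exactly what guarantees that the solution is a star solution rather than a trivial one.

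The main obstacle is the existence step under mere quasimonotonicity, where the clean transfer of part (a) breaks down. Quasimonotonicity only yields $P^{>}(v)\subseteq Q(v)$ for the strict-inequality map $P^{>}(v)=\{u:\exists\,u^*\in T(u),\ \langle u^*,v-u\rangle>0\}$, and $P^{>}$ need not be a KKM map, so one cannot directly conclude that a Minty solution exists. I would attempt to overcome this either by an approximation that replaces $T$ by a family of pseudomonotone-like maps for which part (a) applies, then pass to the limit using the compactness of $K$ together with the closedness and convexity of $MVI(T,K)$, or by a dichotomy showing that the absence of a star solution forces $0$ to belong to $T$ on a relevant subset, contradicting the requirement $\psi_u(\tilde u)\subseteq T(\tilde u)\setminus\{0\}$ of local upper sign-continuity. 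In either route it is the interaction of local upper sign-continuity with the exclusion of $0$ that rescues the weaker quasimonotonicity assumption, and making this interaction precise is the delicate point of the proof.
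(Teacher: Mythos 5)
The paper does not actually prove this lemma: it is imported verbatim from \cite{aussel-hadj} (``obtained by combining [Lemma 2.1] and [Proposition 2.1]'' of that reference), so I am comparing your attempt against the source's argument. Your part (a) is correct and complete, and it is exactly the standard route: the Stampacchia-type map $P$ is KKM whenever $T$ has nonempty values (do state that hypothesis explicitly, since you fix a $u^*\in T(u)$), pseudomonotonicity transfers the KKM property to the closed, compact-valued Minty map $Q$, and Ky Fan's intersection lemma gives $MVI(T,K)=\bigcap_v Q(v)\neq\emptyset$. Your bridge step for part (b) -- local upper sign-continuity along segments plus a minimax/separation argument over the nonempty convex compact set $\psi_{\bar u}(\bar u)\subseteq T(\bar u)\setminus\{0\}$ to extract a single nonzero $\bar u^*$ -- is also the right mechanism and is essentially Proposition~2.1 of \cite{aussel-hadj} (note only that $\psi_{\bar u}$ lives on a neighbourhood $\mathcal O_{\bar u}$, so you must restrict to $s$ near the $\bar u$-end of the segment).

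The genuine gap is the existence step in part (b), and you have correctly diagnosed but not closed it. Quasimonotonicity alone does not make $Q$ a KKM map (quasimonotone operators need not be properly quasimonotone), so no Minty solution is guaranteed, and your proof stops at ``I would attempt to overcome this either by an approximation \dots or by a dichotomy \dots'' without carrying out either route; as written, part (b) is unproved. The approximation route is not developed at all (it is unclear what pseudomonotone-like approximants you would use or why their Minty solutions would converge to one for $T$). The dichotomy route is the right instinct and is how \cite{aussel-hadj} proceeds, but the decisive computation is missing: one splits according to whether $T$ is properly quasimonotone on $K$. If it is, the Minty map $Q$ is KKM and the part-(a) machinery already yields $MVI(T,K)\neq\emptyset$, after which your bridge applies. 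If it is not, there is a finite configuration $x_1,\dots,x_n$ and $\bar x=\sum_i\lambda_i x_i$ with, for each $i$, some $x_i^*\in T(x_i)$ satisfying $\langle x_i^*,\bar x-x_i\rangle>0$; quasimonotonicity then forces $\langle \bar x^*,\bar x-x_i\rangle\geq 0$ for every $\bar x^*\in T(\bar x)$, and since $\sum_i\lambda_i\langle\bar x^*,\bar x-x_i\rangle=0$ these inequalities are equalities on the support of $\lambda$ -- this is the configuration from which the source manufactures a nontrivial Stampacchia solution directly, and it is precisely the step your sketch omits. Until one of your two strategies is turned into such an argument, the quasimonotone case of the lemma remains open in your write-up.
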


Following preliminary result which ensures the occurrence of solutions for variational inequality problems will be helpful in deriving our upcoming existence results:

\begin{theorem}\cite{saigal}\label{theoremsaigal}
	Assume that $K\neq \emptyset$ is a convex compact subset of $\mathbb{R}^n$. Suppose the map $T:K\rightrightarrows \mathbb{R}^n$ meets upper semi-continuity property and admits non-empty convex compact values. Then the corresponding VI problem consists of at least one solution.
\end{theorem}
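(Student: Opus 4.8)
The plan is to reduce the existence of a VI solution to a Ky Fan type minimax inequality, and then to upgrade the resulting ``weak'' solution to a genuine solution by a separate minimax argument that exploits the convexity and compactness of the values $T(x)$. An equivalent route would be a direct appeal to Kakutani's fixed point theorem applied to the map $x\mapsto\{P_K(x-x^*):x^*\in T(x)\}$, where $P_K$ is the metric projection onto $K$; however, since $P_K$ is nonlinear this map need not have convex values, so I would prefer the Ky Fan formulation, which sidesteps that difficulty.

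First I would introduce the bifunction $\phi:K\times K\to\mathbb{R}$ defined by
\[ \phi(x,z)=\min_{x^*\in T(x)}\langle x^*,x-z\rangle, \]
which is well defined because $T(x)$ is non-empty compact and $x^*\mapsto\langle x^*,x-z\rangle$ is continuous. For each fixed $x$, the map $z\mapsto\phi(x,z)$ is an infimum of affine functions, hence concave and in particular quasiconcave, and $\phi(x,x)=0$ for every $x$. The one nontrivial hypothesis to check is that, for each fixed $z$, the marginal function $x\mapsto\phi(x,z)$ is lower semicontinuous, and this is exactly where upper semicontinuity of $T$ together with compactness of its values enters: if $x_n\to x$, upper semicontinuity makes $T$ locally bounded near $x$, so minimizers $x_n^*\in T(x_n)$ admit a convergent subsequence whose limit lies in $T(x)$ by the closed graph property, yielding $\liminf_n\phi(x_n,z)\geq\phi(x,z)$. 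With $K$ convex and compact, the Ky Fan inequality then produces a point $\bar x\in K$ with $\phi(\bar x,z)\leq 0$ for all $z\in K$, that is,
\[ \max_{x^*\in T(\bar x)}\langle x^*,z-\bar x\rangle\geq 0\qquad\text{for every } z\in K. \]

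The hard part will be passing from this ``for each $z$ there exists a suitable $x^*$'' statement to the VI conclusion, which demands a single $\bar x^*\in T(\bar x)$ working simultaneously for all $z$. The inequality above reads $\inf_{z\in K}\max_{x^*\in T(\bar x)}\langle x^*,z-\bar x\rangle\geq 0$, and here the function $(x^*,z)\mapsto\langle x^*,z-\bar x\rangle$ is bilinear and continuous while both $K$ and $T(\bar x)$ are convex and compact; a minimax theorem of von Neumann--Sion type then lets me interchange the infimum and the maximum to obtain
\[ \max_{x^*\in T(\bar x)}\inf_{z\in K}\langle x^*,z-\bar x\rangle\geq 0. \]
Since $T(\bar x)$ is compact the outer maximum is attained at some $\bar x^*\in T(\bar x)$, and for this $\bar x^*$ we get $\langle\bar x^*,z-\bar x\rangle\geq 0$ for all $z\in K$, i.e. $\bar x\in VI(T,K)$. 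I expect this interchange to be the crux of the argument: it is the only step where convexity of the values $T(x)$ is indispensable, and without it only the weaker directionwise inequality would survive.
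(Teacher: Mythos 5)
Your proof is correct, but there is nothing in the paper to compare it against: the statement is quoted from Saigal's 1976 paper and used as a black box, with no internal proof given. Your route --- Ky Fan's minimax inequality applied to $\phi(x,z)=\min_{x^*\in T(x)}\langle x^*,x-z\rangle$, followed by a von Neumann--Sion interchange of $\inf_{z\in K}$ and $\max_{x^*\in T(\bar x)}$ --- is the classical Browder/Hartman--Stampacchia-style argument for multivalued maps, and each hypothesis is consumed exactly where you say it is: upper semicontinuity together with compact values gives local boundedness and the closed-graph property, hence lower semicontinuity of $x\mapsto\phi(x,z)$ (after first passing to a subsequence realizing the $\liminf$); concavity of $z\mapsto\phi(x,z)$ and $\phi(x,x)=0$ make the KKM/Ky Fan machinery applicable; and convexity of the values enters only in the final minimax interchange, where both $K$ and $T(\bar x)$ being convex and compact and the coupling being bilinear are precisely Sion's hypotheses. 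The proof usually attached to the cited result is different in flavor: one applies Kakutani's fixed point theorem to the product map $(x,x^*)\mapsto\bigl(\arg\min_{z\in K}\langle x^*,z\rangle\bigr)\times T(x)$ on $K\times \overline{co}(T(K))$, which has convex values and so circumvents exactly the obstruction you correctly identified in the projection map $x\mapsto\{P_K(x-x^*):x^*\in T(x)\}$; that version is shorter but leans on a fixed point theorem, whereas yours isolates the role of convex values in a single, transparent minimax step. Either way the statement is established; I see no gap.
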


Finally, we recall the concept of star quasi-variational inequalities \cite{ausselradn}: for a given function $g:\mathbb{R}^n\rightarrow \mathbb{R}^m$ and the multi-valued mappings $G:\mathbb{R}^n\rightrightarrows \mathbb{R}^n$ and $M:P\rightrightarrows \mathbb{R}^n$ where $P\neq \emptyset$ is a subset of $\mathbb{R}^m$, the problem $QVI^*(\ref{eqB*})$ corresponds to: 
\begin{align}
	\text{find}&~ (\bar p,\bar y)\in P\times M(\bar p)~\text{such that}~ \exists\, \bar y^*\in G(\bar y)\setminus \{0\}~\text{satisfying}\nonumber\\
	\tag{B}\langle g&(\bar y),p-\bar p\rangle + \langle \bar y^*, z-\bar y \rangle \geq 0,~\text{for any}~(p,z)\in P\times M(\bar p).\label{eqB*}
\end{align} 
Clearly, any solution of the problem $QVI^*(\ref{eqB*})$ is a non-trivial solution of the considered problem $QVI(\ref{eqA})$. Later on, we will see that the problem $QVI^*(\ref{eqB*})$ is actually motivated by an application illustrated in Subsection \ref{application1} (existence of equilibrium for a pure exchange economy with quasi-concave utility functions). In fact, the primary reason for eliminating the zero vector from the map $G$ is to ignore the trivial solution of the quasi-variational inequality problem $QVI(\ref{appl})$, which characterizes the Walrasian equilibrium problem. %illustrated application (existence of equilibrium for a pure exchange economy with quasi-concave utility functions) in subsection \ref{application1}.\color{black} % which involves the adjusted normal operators \cite{ausselnormal,aussel-hadjnormal}.  
\section{Existence Results} 
\subsection{Coercivity Condition for Quasi-Variational Inequalities} \label{coercivity} 
In this subsection, we first define a coercivity condition for the considered class of quasi-variational inequality problems $QVI(\ref{eqA})$, which will be further used by us to show the occurrence of solutions for these problems. 

In connection with variational inequalities defined over unbounded constraint sets, a coercivity condition is usually assumed to ensure the occurrence of solutions for such VI problems. In \cite{aussel-hadj}, Aussel-Hadjisavvas demonstrated the occurrence of solutions for the classical variational inequality $VI(T,K)$ by employing the below mentioned coercivity criterion $(\mathcal{C})$ on the map $T$: 
\begin{align*} (\mathcal{C})\qquad \exists&\, r>0~\text{such that}~ \text{for any}~ y\in K\setminus \bar B(0,r),~\exists\, z\in K
	~\text{with}~\\ &\norm{z}<\norm{y}\text{satisfying}~\langle y^*, y-z\rangle \geq 0,~\text{for each}~y^*\in T(y).
\end{align*} 
Furthermore, Bianchi et al. \cite{bianchi} presented and \textcolor{black}{compared various} coercivity conditions for the variational inequalities defined by generalized monotone maps.

Now, we define the coercivity condition $\mathcal{C}(p)$ for the considered problem $QVI(\ref{eqA})$ by adapting the aforementioned coercivity condition $(\mathcal{C})$. Consequently, we employ the coercivity condition $\mathcal{C}(p)$ to ensure the occurrence of solutions for $QVI(\ref{eqA})$ having unbounded constraint maps. 
%In This section deals with determining the solutions for the given class of quasi-variational inequalities $QVI(\ref{eqA})$ with non-compact valued contraint map $M$, by its reformulation into a variational inequality. %We assume that the constraint map $M$ may admit unbounded values. 
%To prove the existence results for $QVI(\ref{eqA})$, we first modify the coercivity condition $(\mathcal{C})$ into the following coercivity condition $\mathcal{C}(p)$, which is in the context of the given problem $QVI(\ref{eqA})$:% Under these assumptions, the existence of solution for $QVI(\ref{eqA})$ can be proved using the coercivity  following coercivity condition (motivated from \cite{aussel-hadj}) will play a significant role ,
Assume that $G:\mathbb{R}^n\rightrightarrows \mathbb{R}^n$ and $M:P\rightrightarrows \mathbb{R}^n$ are multi-valued mappings associated to $QVI(\ref{eqA})$. For any point $p\in P$, the below mentioned coercivity condition $\mathcal{C}(p)$ is fulfilled at $p$ if:%  the multi-valued maps $G$ and $M$ associated to $QVI(\ref{eqA})$ satisfy  $\mathcal{C}(p)$ at any point $p\in P$ if,
\begin{align*}
	\mathcal{C}(p):\qquad&\exists\, r_p>0~\text{such that}~\forall\, y\in M(p)\setminus \bar B(0,r_p), \exists\, z\in M(p)~\text{with} \\&~\norm z<\norm y~\text{satisfying}~ \langle y^*,y-z\rangle\geq 0,~\text{for each}~y^*\in G(y).
\end{align*}
Note that the coercivity condition $\mathcal{C}(p)$ reduces to the condition $(\mathcal{C})$ if the multi-valued map $M$ is a \textcolor{black}{constant} map. Additionally, if $M(p)$ is bounded at some $p\in P$, then it is simple to verify that the condition $\mathcal{C}(p)$ is fulfilled at that point $p$. In fact, in this case, one may just assume that $r_p >\max \{\norm{y}|\,y\in M(p)\}$.

Following example reveals that the coercivity condition $\mathcal{C}(p)$ is easily applicable to the quasi-variational inequalities having unbounded constraint maps:

\begin{example} \label{eg1}
	Suppose the map $G:\mathbb{R}^2\rightrightarrows \mathbb{R}^2$ is defined as,
	\begin{align*}
		G(x_1,x_2)=\begin{cases}
			\{(2,2)\}~&\text{if}~x_2\neq 0\\
			\{(s,s)~\text{s.t.}~1\leq s\leq 2\}~&\text{if}~x_2=0.
		\end{cases}
	\end{align*}
	For the given set $P=[0,1]\times [0,1]$ and some fixed $\eta\in (0,1)$, the map $M:P\rightrightarrows \mathbb{R}_+^2$ is defined as,
	\begin{align*}
		M(p_1,p_2)=\begin{cases}[0,\eta]\times [0,\infty)~&\text{if}~0\leq p_1\leq \eta\\ [0,p_1]\times [0,\infty)~&\text{if}~\eta< p_1\leq 1.
		\end{cases}
	\end{align*}
	Then, one can verify that the coercivity condition $\mathcal{C}(p)$ holds for any $p\in P$ by considering $r_p=1$.
\end{example}
%\color{red}
%\begin{example}
%	Suppose the map $G:\mathbb{R}^n\rightrightarrows \mathbb{R}^n$ is defined in one of the following ways:
%	\begin{itemize}
	%		\item[(a)] $G(y)=\{-\nabla u(y)\}$ if the function $u:\mathbb{R}^n\rightarrow \mathbb{R}$ is concave and continuously differentiable;
	%	\item[(b)] $G(y)=N_{-u}^a(y)$ (in terms of adjusted normal operator \cite{aussel-hadjnormal}) if the function $u:\mathbb{R}^n\rightarrow \mathbb{R}$ is quasi-concave and continuous.
	%\end{itemize}
	%For a given $P\subset \mathbb{R}^m$, we assume that the following condition is fulfilled by the unbounded constraint map $M:P\rightrightarrows \mathbb{R}^n$ and the function $u:\mathbb{R}^n\rightarrow \mathbb{R}$:
	%\begin{align*}
	%\exists\, n&\in\mathbb{N}~\text{s.t. for any}~p\in P~\text{and}~x\in M(p)~\text{there exists} \\&y\in M(p)~\text{with}~\norm{y}\leq n~\text{satisfying}~u(y)>u(x).
	%\end{align*}
	%Then, it is easy to check that the coercivity condition $\mathcal{C}(p)$ holds for any $p\in P$ in both the cases (a) and (b) if we assume $r_p=n$.
	%\end{example}
	%\color{black}
	\subsection{Variational Inequality Reformulation of Quasi-Variational Inequalities and General Existence Results}\label{mainresult}
	%In this subsection, we prove the existence of solutions for a given class of quasi-variational inequalities $QVI(\ref{eqA})$ by its reformulation into a variational inequality and using an existence result for classical variational inequalities.
	%In this subsection we have two objectives: firstly, we want to emphasize that the specific structure of the considered problems $QVI(\ref{eqA})$ enables us to determine some of its solution through a corresponding variational inequality. Secondly, by utilizing this kind of variational inequality reformulation of $QVI(\ref{eqA})$, we provide the sufficient conditions for the occurrence of solutions of such problem $QVI(\ref{eqA})$ having an unbounded constraint map.
	
	%\color{red}
	In this subsection, our primary objective is to give sufficient conditions which guarantee the presence of a solution for the considered problem $QVI(\ref{eqA})$ defined by an unbounded constraint map. In this regard, we utilize the specific structure of the problem $QVI(\ref{eqA})$, which is beneficial in finding some of its solution by solving a corresponding variational inequality.
	
	%\color{black}
	The below mentioned result is our first main theorem which ensures the occurrence of solutions for the problem $QVI(\ref{eqA})$ having non-compact valued constraint map $M$. In particular, this result is established under the weak continuity and pseudomonotonicity assumption on the map $G$, by employing a preliminary existence result for VI and the coercivity condition $\mathcal{C}(p)$. %the specific structure of the considered problem permits us to use Saigal's classical variational inequality existence theorem \cite{saigal} in order to \begin{theorem}\label{theorempseudo}
		\begin{theorem}\label{theorempseudo}
			Assume that $g:\mathbb{R}^n\rightarrow \mathbb{R}^m$ is an affine map and $P\subset \mathbb{R}^m$ is non-empty, convex and compact. Suppose that,
			\begin{enumerate}[label=(\roman*),ref=(\roman*)]
				\item \label{i} $M:P\rightrightarrows \mathbb{R}^n$ is convex valued, lower semi-continuous and closed mapping where the interior of the set $M(p)$ is non-empty for each $p \in P$;
				\item $G:\mathbb{R}^n\rightrightarrows \mathbb{R}^n$ satisfies the local upper sign-continuity and pseudomonotonicity over the set $co(M(P))$.
			\end{enumerate}
			Then $QVI(\ref{eqA})$ consists a solution if the coercivity condition $\mathcal C(p)$ holds for each $p\in P$ and there exists some $r>sup\{r_p\,|\,p\in P\}$ such that $M(p)\cap \bar B(0,r)\neq \emptyset$ for any $p\in P$. 
		\end{theorem}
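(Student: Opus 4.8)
The plan is to reduce the unbounded problem to a compact one by truncating the constraint map with the ball $\bar B(0,r)$, to solve the resulting compact problem through the variational inequality machinery of Section~\ref{preliminaries}, and finally to exploit the coercivity condition $\mathcal C(p)$ to lift the truncated solution back to $QVI(\ref{eqA})$. Set $M_r(p)=M(p)\cap\bar B(0,r)$. By hypothesis $M_r(p)\neq\emptyset$ for every $p\in P$, and since $M(p)$ is convex and closed while $\bar B(0,r)$ is convex compact, each $M_r(p)$ is a non-empty convex compact set. The first observation I would record is that the product structure of $QVI(\ref{eqA})$ lets its defining inequality split: since $\bar y\in M_r(\bar p)$ and $\bar p\in P$, testing the truncated inequality against $(p,\bar y)$ and against $(\bar p,z)$ separately shows that $(\bar p,\bar y)$ solves the truncated problem if and only if $\bar p$ solves the linear variational inequality $VI(g(\bar y),P)$ and $\bar y$ solves $VI(G,M_r(\bar p))$. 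This is the reformulation that turns the quasi-variational problem into a coupled pair of variational inequalities.

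Next I would build a fixed-point map on a compact convex set. Let $K=co(M_r(P))$, which is convex and compact, and define $\Gamma:P\times K\rightrightarrows P\times K$ by $\Gamma(p,y)=VI(g(y),P)\times MVI(G,M_r(p))$; note $\Gamma(p,y)\subseteq P\times K$ since $MVI(G,M_r(p))\subseteq M_r(p)\subseteq K$. A fixed point $(\bar p,\bar y)\in\Gamma(\bar p,\bar y)$ is precisely a solution of the truncated problem, once one checks that $MVI(G,M_r(p))=VI(G,M_r(p))$: the inclusion $VI\subseteq MVI$ holds by pseudomonotonicity (recalled in Section~\ref{preliminaries}), while the reverse inclusion follows from the local upper sign-continuity of $G$ (Minty solutions are Stampacchia solutions). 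The first factor $VI(g(y),P)$ is the set of minimizers of the continuous linear functional $\langle g(y),\cdot\rangle$ over the compact convex set $P$, hence non-empty, convex, compact, and upper semi-continuous in $y$. For the second factor, Lemma~\ref{lemmaNE}(a), applied to the pseudomonotone map $G$ over the convex compact set $M_r(p)\subseteq co(M(P))$, gives $MVI(G,M_r(p))\neq\emptyset$; together with the closedness and convexity of Minty solution sets this makes the values non-empty, convex and compact. Kakutani's theorem then yields the fixed point, provided $\Gamma$ has closed graph.

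The main obstacle is establishing that the second factor $p\mapsto MVI(G,M_r(p))$ has closed graph (equivalently, is upper semi-continuous), and this is where hypothesis~\ref{i} is spent. First one must verify that the truncated map $M_r=M\cap\bar B(0,r)$ inherits lower semi-continuity and closedness from $M$; lower semi-continuity of an intersection fails in general, and the assumption $int\,M(p)\neq\emptyset$ is precisely the Slater-type transversality condition that secures it, by guaranteeing that $M_r(p)$ cannot collapse under small perturbations of $p$. Granting this, closedness of the graph of $MVI(G,M_r(\cdot))$ is obtained by a limiting argument: for $p_n\to p$ and $y_n\to y$ with $y_n\in MVI(G,M_r(p_n))$, closedness of $M$ forces $y\in M_r(p)$, and for an arbitrary $z\in M_r(p)$ one produces, by lower semi-continuity, points $z_n\in M_r(p_n)$ with $z_n\to z$ and passes the Minty inequalities to the limit. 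I expect the interaction between the varying constraint sets $M_r(p_n)$ and the test vectors drawn from $G$ to be the delicate point, and I would isolate the lower semi-continuity of $M_r$ and the closed-graph property as a separate lemma before invoking Kakutani.

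Finally I would remove the truncation using $\mathcal C(p)$. Let $(\bar p,\bar y)$ be the fixed point, with associated $\bar y^*\in G(\bar y)$; then $\bar y\in M_r(\bar p)\subseteq M(\bar p)$. The $p$-component inequality already holds over all of $P$, since $P$ was never truncated. For the $y$-component I must upgrade $\langle\bar y^*,z-\bar y\rangle\geq0$ from $z\in M_r(\bar p)$ to all $z\in M(\bar p)$; the only problematic case is $\norm z>r$. If $\norm{\bar y}<r$, convexity of $M(\bar p)$ lets me move along $z_t=(1-t)\bar y+tz$, which lies in $M_r(\bar p)$ for small $t>0$, and the truncated inequality gives $t\langle\bar y^*,z-\bar y\rangle\geq0$, hence the claim. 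If $\norm{\bar y}=r>r_{\bar p}$ (using $r>\sup\{r_p\mid p\in P\}$), then $\mathcal C(\bar p)$ supplies $z_0\in M(\bar p)$ with $\norm{z_0}<r$ and $\langle\bar y^*,\bar y-z_0\rangle\geq0$; since $z_0\in M_r(\bar p)$ the truncated inequality also gives $\langle\bar y^*,z_0-\bar y\rangle\geq0$, so $\langle\bar y^*,z_0-\bar y\rangle=0$, and moving along the segment from the interior point $z_0$ to $z$ yields $\langle\bar y^*,z-\bar y\rangle\geq0$ as before. Combining the two components shows that $(\bar p,\bar y)$ solves $QVI(\ref{eqA})$.
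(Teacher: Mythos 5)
Your overall architecture matches the paper's: truncate $M$ by $\bar B(0,r)$, establish that the truncated VI solution map is upper semi-continuous with non-empty convex compact values, extract the coupled pair of variational inequalities by a fixed-point/VI existence argument on a compact set, and then remove the truncation via $\mathcal C(p)$. (The paper applies Saigal's theorem, Theorem \ref{theoremsaigal}, to the composed map $\Gamma_r=g\circ VI(G,M_r(\cdot))$ over $P$ rather than Kakutani to a product map over $P\times K$, but these are interchangeable; and your explicit segment argument in the last paragraph is exactly the content of the Bianchi et al.\ lemma that the paper cites at that point, and it is correct.)

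The gap sits in the step you yourself flag as delicate, and it is twofold. First, lower semi-continuity of $M_r=M(\cdot)\cap\bar B(0,r)$ is not secured by $int\,M(p)\neq\emptyset$ alone: the standard intersection lemma needs a point of $M(p)$ in the \emph{open} ball $B(0,r)$, and if $M(p)\cap\bar B(0,r)$ lay entirely on the sphere of radius $r$ the intersection could fail to be lower semi-continuous. It is the coercivity condition $\mathcal C(p)$ that supplies such a point (any $y\in M(p)$ with $\|y\|=r>r_p$ has a companion $z\in M(p)$ with $\|z\|<r$); the paper makes precisely this observation before invoking its intersection lemma, whereas you attribute the non-collapse to the interior hypothesis alone. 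Second, your limiting argument for the closed graph of $p\mapsto MVI(G,M_r(p))$ does not go through as written: from $z_n\to z$ with $z_n\in M_r(p_n)$ you only obtain $\langle z_n^*,z_n-y_n\rangle\geq 0$ for $z_n^*\in G(z_n)$, and since $G$ is merely locally upper sign-continuous there is no way to pass these inequalities to $\langle z^*,z-y\rangle\geq 0$ for $z^*\in G(z)$. The argument that works (this is what Proposition 4.2 of Aussel--Cotrina, cited in the paper, actually does) tests instead against points $w\in int\,M_r(p)$: by convexity and lower semi-continuity such a $w$ belongs to $M_r(p_n)$ for all large $n$, so the Minty inequality can be evaluated at the \emph{fixed} pair $(w,w^*)$ and passed to the limit, and one then upgrades from interior test points to all of $M_r(p)$ using local upper sign-continuity. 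This is where the non-empty-interior hypothesis is really spent, not merely in the lower semi-continuity of the intersection.
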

		%\color{black}
		%\begin{theorem}\label{theorempseudo}
		%Assume that $g:\mathbb{R}^n\rightarrow \mathbb{R}^m$ is an affine map and $P\neq \emptyset$ is convex compact subset of $\mathbb{R}^m$. Suppose that,
		%\begin{enumerate}[label=(\roman*),ref=(\roman*)]
		%\item \label{i} $M:P\rightrightarrows \mathbb{R}^n$ is lower semi-continuous and closed map where for each $p\in P$, $M(p)$ is convex with $int M(p)\neq \emptyset$;
		%\item $G:\mathbb{R}^n\rightrightarrows \mathbb{R}^n$ satisfies the properties of local upper sign-continuity and pseudomonotonicity on $co(M(P))$.
		%\end{enumerate}
		%Then $QVI(\ref{eqA})$ consists a solution if the coercivity condition $\mathcal C(p)$ holds for each $p\in P$ and there exists some $r>sup\{r_p:p\in P\}$ such that $M(p)\cap \bar B(0,r)\neq \emptyset$ for any $p\in P$. 
		%\end{theorem}
		\color{black}
		\begin{proof}
			Define a multi-valued map $M_r: P\rightrightarrows \mathbb{R}^n$ as $ M_r(p)= M(p) \cap \bar B(0,r).$ Suppose $\Phi_r:P\rightrightarrows \mathbb{R}^n$ is defined as $\Phi_r(p)= VI(G,M_r(p))$, that is,
			\begin{align}
				\Phi_r(p)=\{\bar y\in M_r(p)|\,\exists\,\bar y^*\in G(\bar y)~\text{satisfying}~\langle \bar y^*,y-\bar y \rangle\geq 0,~\forall y\in M_r(p) \}
			\end{align}
			and $\Gamma_r:P\rightrightarrows \mathbb{R}^m$ is defined as $\Gamma_r(p)=g\circ \Phi_r(p)$. We first aim to show that $VI(\Gamma_r,P)$ admits a solution. We observe that for any $\bar p\in P$ solving $VI(\Gamma_r, P)$, there exists some $\bar p^*\in \Gamma_r(\bar p)=g\circ\Phi_r(\bar p)$ or equivalently $\bar p^*=g(\bar y)$ for some $\bar y\in \Phi_r(\bar p)$. Hence, $\bar p\in P$ solving $VI(\Gamma_r,P)$ and $\bar y\in M(\bar p)\cap \bar B(0,r)$ solving $VI(G,M_r(\bar p))$ satisfy the following,
			\begin{align}
				\langle g(\bar y), p-\bar p\rangle &\geq 0,\enspace\text{for all}~ p\in P,~\text{and}\label{eq1}\\
				\exists\, \bar y^*\in G(\bar y), \langle \bar y^*, z-\bar y\rangle &\geq 0,\enspace \text{for all}~ z\in M(\bar p)\cap \bar B(0,r).\label{eq2}
			\end{align}
			
			Evidently, the map $M_r$ is closed which admits convex and compact values due to hypothesis \ref{i}. We claim that $int M_r(p)\neq \emptyset$ for any $p\in P$. Since $M(p)\cap \bar B(0,r)\neq \emptyset$ and the coercivity condition $\mathcal C(p)$ holds at $p$, we can easily show the existence of some point in $M(p)\cap B(0,r)$. This fact, along with the hypothesis that the set $M(p)$ is convex with non-empty interior, leads us to the conclusion $int M_r(p)\neq \emptyset$. Eventually, the map $M_r$ becomes lower semi-continuous according to \cite[Lemma 1]{ausselcoer}. In brief, $M_r$ is closed and lower semi-continuous map where the set $M_r(p)$ is convex compact having non-empty interior for each $p\in P$.
			
			Now, the maps $G$ and $M_r$ fulfil all the conditions in \cite[Proposition 4.2]{aussel-cotr}, which proves $\Phi_r$ is a closed map. Hence, the map $\Phi_r$ meets the upper semi-continuity property due to the fact $\Phi_r(P)\subseteq M(P)\cap \bar B(0,r)$ is a compact set. \color{black}As per \cite[Lemma 3.1(i)]{aussel-cotr}, the local upper sign-continuity of $G$ yields $VI(G,M_r(p))\supseteq MVI(G,M_r(p))$. The pseudomonotonicity of the map $G$ implies $VI(G,M_r(p))\subseteq MVI(G,M_r(p))$.  \color{black} Hence, 
			$$\Phi_r(p)=VI(G,M_r(p))= MVI(G,M_r(p)),\enspace\text{for each}~p\in P.$$
			Hence, it can be concluded from Lemma \ref{lemmaNE} that $\Phi_r$ is upper semi-continuous map with non-empty, convex and compact values as the set $M_r(p)$ is compact. 
			
			This in turn implies that the map $\Gamma_r=g\circ \Phi_r:P\rightrightarrows \mathbb{R}^m$ is upper semi-continuous due to continuity of the function $g$. Furthermore, the fact that $g$ is an affine function leads us to the conclusion that the map $\Gamma_r$ admits non-empty, convex and compact values. Eventually, the map $\Gamma_r$ and the set $P$ fulfil the conditions stated in Theorem \ref{theoremsaigal} and we obtain a vector $\bar p\in P$ solving $VI(\Gamma_r,P)$. %a vector $(\bar p,\bar y)\in P\times M_r(\bar p)$ solving (\ref{eq1}) and (\ref{eq2}).
			
			%The reader may observe that the set $\Phi_r(p)$ is non-empty, convex and compact due to Lemma \ref{lemmaNE} and the fact that $M_r(p)$ is a non-empty, convex and compact set. \color{black} 
			%Hence, we can conclude from Lemma \ref{lemmaNE} that the set $\Phi_r(p)$ is non-empty, convex and compact as $M_r(p)$ is compact. 
			%Hence, the map $\Gamma_r$ which is composition of the multi-valued map $\Phi_r$ and an affine map $g$ admits non-empty, convex and compact values.
			%Consequently, 
			
			Our next goal is to show that the vector $\bar p\in P$ which solves $VI(\Gamma_r,P)$ yields the solution $(\bar p,\bar y)$ for given quasi-variational inequality $QVI(\ref{eqA})$. Thus, it is sufficient to show that inequality (\ref{eq2}) holds for any $z\in M(\bar p)$. For this, we first prove that there exists some $z_{\bar p}\in M(\bar p)\cap B(0,r)$ such that,
			\begin{equation}\label{eq3}
				\langle \bar y^*, \bar y-z_{\bar p}\rangle \geq 0.
			\end{equation}
			In fact, $\bar y\in M(\bar p)\cap \bar B(0,r)$ and in case $\norm{\bar y}<r$, the condition (\ref{eq3}) is satisfied by taking  $z_{\bar p}=\bar y$. On assuming $\norm{\bar y}=r$, we observe $\bar y\in M(\bar p)\setminus \bar B(0,r_{\bar p})$ and there exists some $z_{\bar p}$ satisfying (\ref{eq3}) due to $\mathcal C(\bar p)$. %For an arbitrary $z\in M(\bar p)$, we obtain some $s\in(0,1)$ such that $z_s=sz+(1-s)z_{\bar p}\in M(\bar p)\cap \bar B(0,r)$ and $z_s$ satisfies condition (\ref{eq2}). Thus, by condition (\ref{eq3}) we finally get, 
			\color{black}Since $\bar y^*,\bar y$ and $z_{\bar p}$ satisfy the conditions (\ref{eq2}) and (\ref{eq3}), we observe from \cite[Lemma 3.1 (ii)]{bianchi},\color{black}
			\begin{equation}\label{eq4}
				\bar y^*\in G(\bar y), \langle \bar y^*, z-\bar y\rangle \geq 0,\enspace \forall\, z\in M(\bar p).
			\end{equation}
			
			On summing inequalities (\ref{eq1}) and (\ref{eq4}), we affirm that $(\bar p, \bar y)$ solves $QVI(\ref{eqA})$:
			$$
			\exists\, \bar y^*\in G(\bar y),\langle g(\bar y), p-\bar p\rangle + \langle \bar y^*, z-\bar y\rangle \geq 0,\enspace \forall\, (p,z)\in P\times M(\bar p).
			$$ \end{proof}
		
		%Let us now illustrate an example which shows the significance of Theorem \ref{theorempseudo} in certain situation where existing results (like \cite[Theorem 3]{ausselradn}) may fail to ensure the existence of solution for $QVI(\ref{eqA})$.
		\begin{remark}
			We can obtain \cite[Theorem 3]{ausselradn} as a corollary of Theorem \ref{theorempseudo} by assuming the set $M(P)$ is bounded. Indeed, the following example illustrates that Theorem \ref{theorempseudo} is an actual generalization of \cite[Theorem 3]{ausselradn}:
			\begin{example}
				Consider an affine map $g:\mathbb{R}^2\rightarrow \mathbb{R}^2$ defined as $g(y_1,y_2)=(y_1-1, y_2-2)$. In continuation to Example \ref{eg1}, one may check $G$ is a pseudomonotone map. Furthermore, it fulfils local upper sign-continuity as it is upper semi-continuous map with non-empty convex compact values satisfying $(0,0)\notin G(y)$ for any $y\in \mathbb{R}^2$. Clearly, $M$ is convex valued, lower semi-continuous and closed mapping with $int M(p)\neq \emptyset$ for any $p\in P$. If we fix some $r>1$, then it is easy to verify $M(p)\cap \bar B(0,r)\neq \emptyset$ for any $p\in P$. Hence, $QVI(\ref{eqA})$ admits at least one solution according to Theorem \ref{theorempseudo}. In fact, one can verify $(\bar p,\bar x)=((1,1),(0,0))$ solves the corresponding problem $QVI(\ref{eqA})$.
			\end{example}
		\end{remark}
		\begin{remark}
			We notice that the solution set of the problem $QVI(\ref{eqA})$ is closed under the hypothesis of Theorem \ref{theorempseudo}. In this regard, let us take a sequence $\{(p_n,y_n)\}$ in the solution set of $QVI(\ref{eqA})$ satisfying $\lim_{n\rightarrow \infty}(p_n,y_n)=(\bar p,\bar y)$. Then, we observe that the \cite[Proposition 4.2]{aussel-cotr} together with the continuity of $g$ helps us to prove the vector $(\bar p,\bar y)$ also solves $QVI(\ref{eqA})$. 
		\end{remark}
		\begin{remark}
			The reader may observe that Theorem \ref{theorempseudo} can not be stated in infinite-dimensional space. In the proof of Theorem \ref{theorempseudo}, it occurs that the interior of a compact set $M(p)\cap \bar B(0,r)$ is non-empty, which is only possible if the considered space is finite-dimensional.% by using hypothesis \ref{i} and the coercivity criterion $\mathcal C(p)$.
		\end{remark}
		
		We observe that the map $G$ is considered as pseudomonotone in Theorem \ref{theorempseudo}, due to which it is not applicable to some useful problems such as quasi-convex optimization \cite{ausselnormal, ausselQVI, aussel-hadjnormal}. This fact motivates us to weaken the assumption of pseudomonotonicity on $G$ and consider it as quasimonotone. 
		%However, we need to remove the zero vector from the map $G$ in the given problem $QVI(\ref{eqA})$ for the sake of application considered (existence of equilibrium for a pure exchange economy with quasi-concave utility functions) which involves the adjusted normal operators \cite{ausselnormal,aussel-hadjnormal}. Therefore, we define the following concept of star quasi-variational inequality: for a given function $g:\mathbb{R}^n\rightarrow \mathbb{R}^m$ and the multi-valued mappings $G:\mathbb{R}^n\rightrightarrows \mathbb{R}^n$ and $M:P\rightrightarrows \mathbb{R}^n$ where $P\neq \emptyset$ is a set in $\mathbb{R}^m$, the problem $QVI^*(\ref{eqB*})$ corresponds to: 
		%\begin{align}
		%\text{find}&~ (\bar p,\bar y)\in P\times M(\bar p)~\text{such that}~ \exists\, \bar y^*\in G(\bar y)\setminus \{0\}~\text{satisfying}\nonumber\\
		%\tag{B}\langle g&(\bar y),p-\bar p\rangle + \langle \bar y^*, z-\bar y \rangle \geq 0,~\text{for any}~(p,z)\in P\times M(\bar p).\label{eqB*}
		%\end{align} 
		%Clearly, any solution of the problem $QVI^*(\ref{eqB*})$ is a non-trivial solution of the considered class of quasi-variational inequality $QVI(\ref{eqA})$.
		In our upcoming existence result, we demonstrate the occurrence of non-trivial solutions for the considered problem $QVI(\ref{eqA})$ by considering the map $G$ is quasi-monotone. 
		
		%In our upcoming existence result, we demonstrate the existence of non-trivial solutions to the problem $QVI(\ref{eqA})$ under the quasimonotonicity assumption on the map $G$ which is weaker than pseudomonotonicity assumption. But, when compared to Theorem \ref{theorempseudo}, here we need an extra assumption of  semi-continuity on the map $G$. 
		
		%\color{red}In our upcoming existence result, we relax the hypothesis of psuedomonotonicity of the map $G$ considered in previous result, by a more general assumption that $G$ satisfies quasimonotonicity. But, here we need an extra assumption of  semi-continuity on the map $G$. 
		\color{black}
		\begin{theorem}\label{theoremquasi}
			Assume that $g:\mathbb{R}^n\rightarrow \mathbb{R}^m$ is an affine map and $P\subset \mathbb{R}^m$ is non-empty, convex and compact. Suppose that:
			\begin{enumerate}[label=(\roman*),ref=(\roman*)]
				\item  $M:P\rightrightarrows \mathbb{R}^n$ is convex valued, lower semi-continuous and closed mapping where the interior of the set $M(p)$ is non-empty for each $p\in P$;
				\item \label{ii} $G:\mathbb{R}^n\rightrightarrows \mathbb{R}^n$ satisfies the local upper sign-continuity, dual lower semi-continuity and quasimonotonicity over the set $co(M(P))$.
			\end{enumerate}
			Then there exists a solution for $QVI^*(\ref{eqB*})$ if the coercivity condition $\mathcal{C}(p)$ holds for each $p\in P$ and there exists some $r>sup\{r_p\,|\,p\in P\}$ such that $M(p)\cap \bar B(0,r)\neq \emptyset$ for any $p\in P$.
		\end{theorem}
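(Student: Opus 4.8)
The plan is to mirror the proof of Theorem~\ref{theorempseudo}, replacing the identity $VI=MVI$ available for pseudomonotone maps by the identity $VI^{*}=MVI$, which is the natural substitute under quasimonotonicity together with dual lower semi-continuity. First I would truncate the constraint map, setting $M_r(p)=M(p)\cap \bar B(0,r)$, and reproduce verbatim the argument from Theorem~\ref{theorempseudo}: using the coercivity condition $\mathcal C(p)$ and the assumption $M(p)\cap \bar B(0,r)\neq\emptyset$ one finds a point of $M(p)\cap B(0,r)$, so $int\, M_r(p)\neq\emptyset$, and by \cite[Lemma 1]{ausselcoer} the map $M_r$ is closed and lower semi-continuous with convex compact values of non-empty interior. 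I would then define $\Phi_r^{*}(p)=VI^{*}(G,M_r(p))$, which is non-empty for every $p\in P$ by Lemma~\ref{lemmaNE}(b), since $G$ is quasimonotone and locally upper sign-continuous on the convex compact set $M_r(p)\subseteq co(M(P))$.

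The crux is to prove the identity $\Phi_r^{*}(p)=MVI(G,M_r(p))$, which supplies the convexity and compactness that quasimonotonicity alone does not yield. The inclusion $MVI(G,M_r(p))\subseteq VI^{*}(G,M_r(p))$ follows from local upper sign-continuity as in \cite[Lemma 3.1(i)]{aussel-cotr}, the non-triviality of the witnessing functional being automatic because the selection $\psi$ in the definition of local upper sign-continuity takes values in $G(\cdot)\setminus\{0\}$. For the reverse inclusion I would take $\bar y\in VI^{*}(G,M_r(p))$ with witness $\bar y^{*}\in G(\bar y)\setminus\{0\}$, fix $z\in M_r(p)$ and $z^{*}\in G(z)$, pick an interior point $\xi$ of $M_r(p)$ (so that $\langle \bar y^{*},\xi-\bar y\rangle>0$, which forces $\bar y^{*}\neq 0$ to be used genuinely), and move along $z_t=(1-t)z+t\xi\to z$. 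Then $\langle \bar y^{*},z_t-\bar y\rangle>0$, so quasimonotonicity gives $\langle z_t^{*},z_t-\bar y\rangle\geq 0$ for all $z_t^{*}\in G(z_t)$, i.e. $\sup_{z_t^{*}}\langle z_t^{*},\bar y-z_t\rangle\leq 0$; dual lower semi-continuity of $G$ (with $v=\bar y$ fixed and $z_t\to z$) then yields $\langle z^{*},z-\bar y\rangle\geq 0$. This is the main obstacle, as it is precisely the boundary case $\langle\bar y^{*},z-\bar y\rangle=0$—which quasimonotonicity cannot treat directly—that must be recovered by this perturbation-and-limit argument resting on non-triviality, non-empty interior, and dual lower semi-continuity. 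Hence $\Phi_r^{*}(p)=MVI(G,M_r(p))$ is convex and compact.

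With this identity in hand, the rest follows the template of Theorem~\ref{theorempseudo}. Using \cite[Proposition 4.2]{aussel-cotr} and the continuity of $M_r$, I would argue that $\Phi_r^{*}$ is closed and, being contained in the compact set $M(P)\cap \bar B(0,r)$, is upper semi-continuous with non-empty convex compact values; composing with the affine map $g$ shows that $\Gamma_r^{*}=g\circ\Phi_r^{*}$ is upper semi-continuous with non-empty convex compact values, so Theorem~\ref{theoremsaigal} produces $\bar p\in P$ solving $VI(\Gamma_r^{*},P)$. Unwinding this yields $\bar y\in VI^{*}(G,M_r(\bar p))$ and a non-zero $\bar y^{*}\in G(\bar y)$ with $\langle g(\bar y),p-\bar p\rangle\geq 0$ on $P$ and $\langle \bar y^{*},z-\bar y\rangle\geq 0$ on $M(\bar p)\cap \bar B(0,r)$. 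Finally, exactly as in the previous proof, either $\|\bar y\|<r$ or the coercivity condition $\mathcal C(\bar p)$ furnishes $z_{\bar p}$ with $\langle\bar y^{*},\bar y-z_{\bar p}\rangle\geq 0$, and \cite[Lemma 3.1(ii)]{bianchi} upgrades the second inequality to all $z\in M(\bar p)$; summing the two inequalities shows that $(\bar p,\bar y)$, together with its non-zero witness $\bar y^{*}$, solves $QVI^{*}(\ref{eqB*})$.
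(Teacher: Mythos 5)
Your proposal is correct and follows essentially the same route as the paper: truncate $M$ to $M_r$, form $\Phi_r^*(p)=VI^*(G,M_r(p))$ and $\Gamma_r^*=g\circ\Phi_r^*$, invoke Theorem~\ref{theoremsaigal} on $P$, and then use the coercivity condition $\mathcal{C}(\bar p)$ to pass from $M_r(\bar p)$ to $M(\bar p)$. The only difference is cosmetic: where the paper cites \cite[Lemma 3.1(v)]{aussel-cotr} for the identity $VI^*(G,M_r(p))=MVI(G,M_r(p))$ (and \cite[Proposition 4.3]{aussel-cotr} for closedness of $\Phi_r^*$), you reprove that identity directly via the interior-point perturbation and dual lower semi-continuity argument, which is a valid inlining of the cited result.
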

		\begin{proof}
			Define a multi-valued map $M_r:P\rightrightarrows \mathbb{R}^n$ as $M_r(p)=M(p)\cap \bar B(0,r)$. Suppose $\Phi_r^*:P\rightrightarrows \mathbb{R}^n$ is considered as $\Phi_r^*(p)=VI^*(G,M_r(p))$, that is,
			\begin{equation}
				\Phi_r^*(p)=\{\bar y\in M_r(p)|\,\exists\,\bar y^*\in G(\bar y)\setminus\{0\}~\text{satisfying}~\langle \bar y^*,y-\bar y \rangle\geq 0,~\forall y\in M_r(p) \}
			\end{equation}
			and $\Gamma_r^*:P\rightrightarrows \mathbb{R}^m$ is considered as $\Gamma_r^*(p)=g\circ \Phi_r^*(p)$. We first aim to show that $VI(\Gamma_r^*, P)$ admits a solution. We observe that for any $\bar p\in P$ solving $VI(\Gamma_r^*,P)$, there exists some $\bar p^*\in \Gamma_r^*(\bar p)=g\circ \Phi_r^*(\bar p)$ or equivalently $\bar p^*=g(\bar y)$ for some $\bar y\in \Phi_r^*(\bar p)$. Hence, $\bar p\in P$ solving $VI(\Gamma_r^*,P)$ and $\bar y\in M(\bar p)\cap \bar B(0,r)$ solving $VI^*(G,M_r(\bar p))$ satisfy,
			\begin{align}
				\langle g(\bar y), p-\bar p\rangle &\geq 0,\enspace\text{for all}~ p\in P,~\text{and}\label{eq5}\\
				\exists\, \bar y^*\in G(\bar y)\setminus\{0\}, \langle \bar y^*, z-\bar y\rangle &\geq 0,\enspace\text{for all}~ z\in M(\bar p)\cap \bar B(0,r).\label{eq5'}
			\end{align} 
			
			According to the proof of Theorem \ref{theorempseudo}, $M_r$ is lower semi-continuous closed map and the set $M_r(p)$ is convex compact having non-empty interior for each $p\in P$. Note that $G$ and $M_r$ fulfil all the required conditions in \cite[Proposition 4.3]{aussel-cotr}, which proves $\Phi_r^*$ is a closed map. Hence, the map $\Phi_r^*$ meets the upper semi-continuity property due to the fact $\Phi_r^*(P)\subseteq M(P)\cap \bar B(0,r)$ is compact. 	In the view of hypothesis $\ref{ii}$ on the map $G$, we get the following relation for any $p\in P$ due to \cite[Lemma 3.1(v)]{aussel-cotr},
			$$\Phi_r^*(p)=VI^*(G,M_r(p))=MVI(G,M_r(p)).$$
			Hence, we can conclude from Lemma \ref{lemmaNE}, $\Phi_r^*$ is upper semi-continuous map with non-empty convex compact values as the set $M_r(p)$ is compact. 
			
			This in turn implies the map $\Gamma_r^*=g\circ \Phi_r^*:P\rightrightarrows \mathbb{R}^m$ is upper semi-continuous due to continuity of the function $g$. Furthermore, the fact that $g$ is an affine function, leads us to the conclusion that $\Gamma_r^*$ admits non-empty, convex and compact values. Eventually, the map $\Gamma_r^*$ and the set $P$ fulfils the conditions stated in Theorem \ref{theoremsaigal} and we obtain a vector $\bar p\in P$ solving $VI(\Gamma_r^*,P)$.
			
			Finally, it is simple to check that $(\bar p, \bar y)\in P\times M_r(\bar p)$ solving (\ref{eq5}) and (\ref{eq5'}) also solves $QVI^*(\ref{eqB*})$ by following the steps provided in the end of Theorem \ref{theorempseudo}.
		\end{proof}
		
		We can obtain \cite[Theorem 4]{ausselradn} as a corollary of the above-mentioned result Theorem \ref{theoremquasi} by assuming the set $M(P)$ is bounded.
		
		\color{black}	For any $i\in \mathcal{I}=\{1,\cdots,N\}$, consider multi-valued maps $G_i:\mathbb{R}^{n_i}\rightrightarrows \mathbb{R}^{n_i}$ and $M_i:P\rightrightarrows \mathbb{R}^{n_i}$, where $\sum_{i\in \mathcal{I}} n_i=n$. Assume that the maps $G$ and $M$ are replaced by,
		\begin{equation}\label{Gprod}
			G(x)=\prod_{i\in \mathcal{I}} G_i(x_i),M(p)=\prod_{i\in \mathcal{I}} M_i(p)
		\end{equation}
		%From Example 1 and 2 in \cite{ausselprod}, one can observe that 
		and $(G(x)\setminus \{0\})$ is replaced by $(G_1(x_1)\setminus \{0\},\cdots, G_N(x_N)\setminus \{0\})$ in $QVI^*(\ref{eqB*})$. It is well known that the map $G$ formed as product of quasi-monotone and locally upper sign-continuous maps $G_i$, need not preserve the similar properties \cite{ausselprod}. By assuming these hypotheses on the component maps $G_i$, we extend Theorem \ref{theoremquasi} to the case where $G$ and $M$ are product maps.		
		\begin{theorem}\label{theoremquasiproduct}
			Assume that $g:\mathbb{R}^n\rightarrow \mathbb{R}^m$ is an affine map and $P\subset \mathbb{R}^m$ is non-empty, convex and compact. Suppose that for any $i\in \mathcal{I}$:
			\begin{enumerate}[label=(\roman*),ref=(\roman*)]
				\item  $M_i:P\rightrightarrows \mathbb{R}^{n_i}$ is convex valued, lower semi-continuous and closed mapping where the interior of the set $M_i(p)$ is non-empty for each $p\in P$;
				\item \label{ii} $G_i:\mathbb{R}^{n_i}\rightrightarrows \mathbb{R}^{n_i}$ satisfies the local upper sign-continuity, dual lower semi-continuity and quasimonotonicity over the set $co(M_i(P))$;
				\item for any $p\in P$,
				\begin{align*}
					\qquad&\exists\, r^i_p>0~\text{such that}~\forall\, y_i\in M_i(p)\setminus \bar B(0,r^i_p), \exists\, z_i\in M_i(p)~\text{with} \\&~\norm{z_i}<\norm {y_i}~\text{satisfying}~ \langle y_i^*,z_i-y_i\rangle\leq 0,~\text{for each}~y_i^*\in G_i(y_i).
				\end{align*} 
				%the coercivity condition $\mathcal{C}^i(p)$ holds for each $p\in P$ and. 
			\end{enumerate}
			Let $G$ and $M$ be defined as (\ref{Gprod}) then there exists a solution for $QVI^*(\ref{eqB*})$ if there exists some $r_i>sup\{r^i_p\,|\,p\in P\}$ such that $M_i(p)\cap \bar B(0,r_i)\neq \emptyset$ for any $p\in P$.
		\end{theorem}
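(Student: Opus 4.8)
The plan is to imitate the proof of Theorem \ref{theoremquasi}, but to carry out every construction componentwise, so as to circumvent the fact that the product map $G=\prod_{i\in\mathcal{I}}G_i$ inherits neither quasimonotonicity nor local upper sign-continuity from its factors \cite{ausselprod}. For each $i\in\mathcal{I}$ I would first truncate the $i$-th constraint map by setting $M_{i,r_i}(p)=M_i(p)\cap\bar B(0,r_i)$. Exactly as in the proof of Theorem \ref{theorempseudo} --- using that $M_i(p)\cap\bar B(0,r_i)\neq\emptyset$ together with the coercivity condition (iii) to produce a point of $M_i(p)\cap B(0,r_i)$, and then \cite[Lemma 1]{ausselcoer} --- each $M_{i,r_i}$ is a closed, lower semi-continuous map whose values are convex, compact and have non-empty interior for every $p\in P$.

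Next I would treat each component separately. For fixed $i$, define the star-solution map $\Phi_{i,r_i}^*(p)=VI^*(G_i,M_{i,r_i}(p))$. Since $G_i$ is locally upper sign-continuous, dually lower semi-continuous and quasimonotone over $co(M_i(P))\supseteq M_{i,r_i}(p)$, the argument of Theorem \ref{theoremquasi} applies verbatim at the component level: \cite[Proposition 4.3]{aussel-cotr} makes $\Phi_{i,r_i}^*$ a closed (hence, since its range lies in a compact set, upper semi-continuous) map, \cite[Lemma 3.1(v)]{aussel-cotr} yields the identity $\Phi_{i,r_i}^*(p)=VI^*(G_i,M_{i,r_i}(p))=MVI(G_i,M_{i,r_i}(p))$, and Lemma \ref{lemmaNE} guarantees that these values are non-empty, convex and compact.

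The crucial step is then to assemble the components. I would set $\Phi_r^*(p)=\prod_{i\in\mathcal{I}}\Phi_{i,r_i}^*(p)$ and $\Gamma_r^*=g\circ\Phi_r^*$. Being a finite product of upper semi-continuous maps with non-empty convex compact values, with range in a fixed compact set, $\Phi_r^*$ is again upper semi-continuous with non-empty convex compact values; since $g$ is affine, so is $\Gamma_r^*$. Theorem \ref{theoremsaigal} then furnishes a point $\bar p\in P$ solving $VI(\Gamma_r^*,P)$, and, as in the earlier proofs, this produces $\bar y=(\bar y_i)_{i\in\mathcal{I}}\in\Phi_r^*(\bar p)$ with $g(\bar y)\in\Gamma_r^*(\bar p)$ satisfying $\langle g(\bar y),p-\bar p\rangle\geq 0$ for all $p\in P$, while for every $i$ there is some $\bar y_i^*\in G_i(\bar y_i)\setminus\{0\}$ with $\langle\bar y_i^*,z_i-\bar y_i\rangle\geq 0$ for all $z_i\in M_i(\bar p)\cap\bar B(0,r_i)$. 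Here lies the heart of the matter: a tuple of componentwise star-solutions, each carrying its own non-zero selection $\bar y_i^*$, is precisely a product star-solution of the truncated problem, which is exactly the notion obtained by replacing $G(x)\setminus\{0\}$ with $(G_1(x_1)\setminus\{0\},\dots,G_N(x_N)\setminus\{0\})$ in $QVI^*(\ref{eqB*})$.

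It remains to remove the truncation. For each $i$, if $\norm{\bar y_i}<r_i$ I take $z_{i,\bar p}=\bar y_i$; if $\norm{\bar y_i}=r_i>r^i_{\bar p}$, the coercivity condition (iii) at $\bar p$ furnishes $z_{i,\bar p}\in M_i(\bar p)$ with $\norm{z_{i,\bar p}}<\norm{\bar y_i}$ and $\langle\bar y_i^*,\bar y_i-z_{i,\bar p}\rangle\geq 0$. Invoking \cite[Lemma 3.1(ii)]{bianchi} componentwise then upgrades each inequality to $\langle\bar y_i^*,z_i-\bar y_i\rangle\geq 0$ for all $z_i\in M_i(\bar p)$. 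Summing over $i\in\mathcal{I}$ and adding the $P$-inequality gives, for every $(p,z)\in P\times M(\bar p)$,
\[
\langle g(\bar y),p-\bar p\rangle+\langle\bar y^*,z-\bar y\rangle
=\langle g(\bar y),p-\bar p\rangle+\sum_{i\in\mathcal{I}}\langle\bar y_i^*,z_i-\bar y_i\rangle\geq 0,
\]
with $\bar y^*=(\bar y_i^*)_{i\in\mathcal{I}}$ having each block in $G_i(\bar y_i)\setminus\{0\}$, so that $(\bar p,\bar y)$ solves $QVI^*(\ref{eqB*})$. The only genuinely new difficulty compared with Theorem \ref{theoremquasi} is organizational: one must never form the product map $G$ before solving the inner variational inequalities, but instead solve them factor-by-factor --- where $G_i$ does enjoy the requisite generalized monotonicity and continuity --- and only afterwards take the Cartesian product of the (well-behaved) solution maps $\Phi_{i,r_i}^*$.
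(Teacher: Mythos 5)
Your proposal is correct and follows essentially the same route as the paper's own (much more compressed) proof: truncate each $M_i$ by $\bar B(0,r_i)$, form the componentwise star-solution maps $\Phi^*_{r_i}(p)=VI^*(G_i,M_i(p)\cap\bar B(0,r_i))$, take their Cartesian product, compose with $g$, apply Theorem \ref{theoremsaigal}, and then remove the truncation componentwise via the coercivity condition and \cite[Lemma 3.1(ii)]{bianchi}. The paper merely sketches these steps by deferring to the proofs of Theorems \ref{theorempseudo} and \ref{theoremquasi}; you have filled in the same argument in detail, correctly identifying that the factor-by-factor treatment is what sidesteps the failure of quasimonotonicity and local upper sign-continuity for the product map $G$.
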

		\begin{proof}
			Define multivalued map $\Phi^*_{r_i}:P\rightrightarrows \mathbb{R}^{n_i}$ as, $\Phi^*_{r_i}(p)=VI^*(G_i,M_{r_i}(p))$ where the map $M_{r_i}:P\rightrightarrows \mathbb{R}^{n_i}$ is considered to be $M_{r_i}(p)=M_i(p)\cap \bar B(0,r_i)$. Then, the map $\Phi^*_{r_i}$ is upper semi-continuous with non-empty, convex and compact values as per the proof of Theorem \ref{theoremquasi}. Suppose $\Phi^*_r:P\rightrightarrows \mathbb{R}^n$ is defined as $\Phi^*_r(p)=\prod_{i\in \Lambda} \Phi^*_{r_i}(p)$. By assuming $\Gamma_r^*:P\rightrightarrows \mathbb{R}^m$ as $\Gamma_r^*(p)=g\circ \Phi_r^*(p)$, the existence of solution for $QVI^*(\ref{eqB*})$ can be proved under the given assumptions by following the proof of Theorem \ref{theoremquasi}.
		\end{proof}
		\color{black}
		%\subsection{Existence result for QVI(\ref{eqA}) using conventional approach}
		%In fact, for the given function $g$, correspondences $G$ and $M$ in $QVI(\ref{eqA})$, if we define the correspondences $\widetilde{M}:P\times co(M(P))\rightrightarrows P\times co(M(P))$ and $T:P\times co(M(P)) \rightrightarrows \mathbb{R}^m\times \mathbb{R}^n$ as follows,
		%\begin{align*}
		%\widetilde{M}(p,y)=P\times M(p)~\text{and}~T(p,y)=\{(g(y),y^*)|\,y^*\in G(y)\}.
		%\end{align*} 
		%then we notice that $QVI(\ref{QVI})$ is to find $(\bar p,\bar y)\in P\times co(M(p))$ such that $(\bar p,\bar y)\in \widetilde{M}(\bar p,\bar y)$ and satisfies following, 
		%\begin{equation*}
		%\exists \bar y^*\in G(\bar y),\langle (g(\bar y),\bar y^*),(p,y)-(\bar p,\bar y) \rangle\geq 0~\text{for any}~(p,y)\in \widetilde{M}(\bar p,\bar y).
		%\end{equation*}
		%This shows the given problem $QVI(\ref{eqA})$ is equivalent to the classical quasi-variational inequality $QVI(\ref{QVI})$.
		\subsection{Alternative Existence Result}\label{alternative}
		In the previous subsection, we demonstrated the occurrence of solutions for $QVI(\ref{eqA})$ \color{black}under the generalized monotonicity assumptions, \color{black} through a preliminary existence result for classical variational inequalities. However, it can be observed that $QVI(\ref{eqA})$ is a specific instance of the general framework $QVI(T,K)$ (\ref{QVI}) where $T:D\rightrightarrows \mathbb{R}^m\times \mathbb{R}^n$ is formed as $ T(p,y)= \{(g(y),y^*)|\,y^*\in G(y)\}$ and $K:D\rightrightarrows D$ is defined by $K(p,y)=P\times M(p)$ for the set $D=P\times co(M(P))$. Then, $QVI(\ref{eqA})$ appears as,  
		\begin{align}
			\text{find}~& (\bar p,\bar y)\in K(\bar p,\bar y)~\text{such that}~ \exists\, (g(\bar y),\bar y^*)\in T(\bar p,\bar y)~\text{satisfying}\nonumber\\
			&\langle (g(\bar y),\bar y^*),(p,z)-(\bar p,\bar y)\rangle \geq 0,~\text{for any}~(p,z)\in K(\bar p,\bar y).
		\end{align} 
		
		In this subsection, we present an alternative approach to show the occurrence of solutions for $QVI(\ref{eqA})$ \color{black}without requiring any monotonicity assumption on $G$. \color{black}%by employing a preliminary existence result for classical quasi-variational inequality \cite[Corollary(Theorem 3)]{tan}.% and the coercivity condition $\mathcal{C}(p)$.
		
		%The classical existence result for QVI by Tan \cite[Corollary(Theorem 3)]{tan} and the coercivity condition $\mathcal C(p)$ will play a significant role to obtain the existence of solution for $QVI(\ref{eqA})$.
		\begin{theorem} \label{theoremalt}
			Assume that  $g:\mathbb{R}^n\rightarrow \mathbb{R}^m$ is a continuous function and $P\subset \mathbb{R}^m$ is non-empty, convex and compact. If the following conditions are satisfied:
			\begin{enumerate}[label=(\roman*),ref=(\roman*)]
				\item $M:P\rightrightarrows \mathbb{R}^n$ is a lower semi-continuous and closed map which admits convex values;
				\item $G:\mathbb{R}^n\rightrightarrows\mathbb{R}^n$ is an upper semi-continuous map which admits non-empty, convex and compact values on the set $co(M(P))$.
			\end{enumerate}
			Then $QVI(\ref{eqA})$ consists a solution if the coercivity condition $C(p)$ holds for each $p\in P$ and there exists some $r>sup\{r_p\,|\,p\in P\}$ such that $M(p)\cap \bar B(0,r)\neq \emptyset$ for any $p\in P$.
		\end{theorem}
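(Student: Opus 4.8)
The plan is to exploit the reformulation recorded just above the statement, which recasts $QVI(\ref{eqA})$ as a classical quasi-variational inequality $QVI(T,K)$ with operator $T(p,y)=\{g(y)\}\times G(y)$ and constraint map $K(p,y)=P\times M(p)$ on $D=P\times co(M(P))$. Since no monotonicity is imposed on $G$ here, the Minty reformulation used in Theorem \ref{theorempseudo} is unavailable, so instead I would invoke the classical existence theorem for $QVI(T,K)$ that requires only upper semi-continuity of the operator, namely the result of Tan \cite{tan}. As that theorem needs a compact domain, the argument proceeds in three stages: truncate the unbounded constraint to a compact one, apply Tan's theorem to the truncated problem, and finally use the coercivity condition $\mathcal{C}(p)$ to lift the truncated solution to a genuine solution of $QVI(\ref{eqA})$.

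First I would fix $r$ as in the hypothesis and set $M_r(p)=M(p)\cap\bar B(0,r)$, $K_r(p,y)=P\times M_r(p)$, and $D_r=P\times C_r$ where $C_r=co\big(M(P)\cap\bar B(0,r)\big)$. Because $M$ is closed and $P$ is compact, $M(P)$ is closed, so $M(P)\cap\bar B(0,r)$ is compact and $C_r$ is a compact convex subset of $co(M(P))$ containing every $M_r(p)$; hence $D_r$ is compact convex and $K_r$ maps $D_r$ into itself. The values $P\times M_r(p)$ are non-empty (using $M(p)\cap\bar B(0,r)\neq\emptyset$), convex and compact. For Tan's theorem I must check that $K_r$ is continuous and that $T$ is upper semi-continuous with non-empty convex compact values on $D_r$; the latter is immediate since $g$ is continuous and $G$ is upper semi-continuous with non-empty convex compact values on $co(M(P))$. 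Upper semi-continuity of $K_r$ follows from the closedness of $M$ together with the boundedness of its truncated values.

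The delicate step, and the main obstacle, is the lower semi-continuity of $K_r$, equivalently of $M_r$. Unlike Theorems \ref{theorempseudo} and \ref{theoremquasi}, the present hypotheses do not force $M(p)$ to have non-empty interior, so the route through \cite[Lemma 1]{ausselcoer} is not directly available. I would instead observe that, exactly as in the proof of Theorem \ref{theorempseudo}, the ball-intersection hypothesis $M(p)\cap\bar B(0,r)\neq\emptyset$ combined with $\mathcal{C}(p)$ for $r>r_p$ produces a point of $M(p)$ in the \emph{open} ball $B(0,r)=int\,\bar B(0,r)$. This Slater-type condition, together with the lower semi-continuity and convex-valuedness of $M$, yields the lower semi-continuity of the intersection map $M_r$ by the standard intersection lemma for lower semi-continuous convex-valued maps. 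With continuity of $K_r$ in hand, Tan's theorem applies to $QVI(T,K_r)$ on $D_r$ and delivers $(\bar p,\bar y)$ with $\bar p\in P$, $\bar y\in M(\bar p)\cap\bar B(0,r)$, and $\bar y^*\in G(\bar y)$ satisfying $\langle g(\bar y),p-\bar p\rangle+\langle\bar y^*,z-\bar y\rangle\geq 0$ for all $(p,z)\in P\times M_r(\bar p)$.

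Finally I would lift this to the full problem precisely as at the end of Theorem \ref{theorempseudo}. Taking $z=\bar y$ gives $\langle g(\bar y),p-\bar p\rangle\ge 0$ for every $p\in P$, which already holds over the untruncated set $P$ and so needs no further work; taking $p=\bar p$ gives $\langle\bar y^*,z-\bar y\rangle\ge 0$ for all $z\in M(\bar p)\cap\bar B(0,r)$. Using $\mathcal{C}(\bar p)$ to produce, when $\norm{\bar y}=r$, a point $z_{\bar p}\in M(\bar p)\cap B(0,r)$ with $\langle\bar y^*,\bar y-z_{\bar p}\rangle\ge 0$, the line-segment argument of \cite[Lemma 3.1(ii)]{bianchi} extends the inequality to $\langle\bar y^*,z-\bar y\rangle\ge 0$ for all $z\in M(\bar p)$. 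Summing the two inequalities then shows $(\bar p,\bar y)$ solves $QVI(\ref{eqA})$. I expect the only genuinely new point relative to the earlier proofs to be the lower semi-continuity of $M_r$ under the weakened hypotheses, with everything else being either routine or a direct transcription of the truncation-and-lift scheme already carried out for Theorem \ref{theorempseudo}.
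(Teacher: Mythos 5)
Your proposal is correct and follows the paper's overall scheme exactly: truncate $M$ to $M_r(p)=M(p)\cap\bar B(0,r)$, use the coercivity condition to manufacture a Slater point in $M(p)\cap B(0,r)$ so that the intersection map stays lower semi-continuous (the paper cites \cite[Corollary 1.2.3]{lsc} for precisely this; your observation that the non-empty-interior route of Theorems \ref{theorempseudo}--\ref{theoremquasi} is unavailable here and must be replaced by the open-ball Slater point is exactly the point the paper relies on), solve the compact problem, and lift via $\mathcal{C}(\bar p)$ and \cite[Lemma 3.1(ii)]{bianchi}. The one place you diverge is the middle step: the paper simply verifies that $g$, $G$ and $M_r$ satisfy the hypotheses of the bounded-constraint existence result \cite[Theorem 5]{ausselradn} and cites it as a black box, whereas you re-derive that compact case from scratch by passing to the classical reformulation $QVI(T,K_r)$ with $T(p,y)=\{g(y)\}\times G(y)$, $K_r(p,y)=P\times M_r(p)$ on $D_r=P\times co\bigl(M(P)\cap\bar B(0,r)\bigr)$ and invoking Tan's theorem \cite{tan}. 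Your route is self-contained and makes explicit why no monotonicity on $G$ is needed (Tan's theorem asks only for upper semi-continuity of the operator), at the cost of having to verify continuity of $K_r$ and the invariance $K_r(D_r)\subseteq 2^{D_r}$ yourself; the paper's route is shorter because \cite[Theorem 5]{ausselradn} already packages the compact-valued case of the structured problem $QVI(\ref{eqA})$. Both are sound, and your checks (upper semi-continuity of $M_r$ from closedness plus boundedness, compact convex values of $T$, the degenerate-versus-boundary case split $\norm{\bar y}<r$ versus $\norm{\bar y}=r$ in the lift) are the right ones.
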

		\begin{proof}
			\color{black}Define a multi-valued map $M_r:P\rightrightarrows \mathbb{R}^n$ as $M_r(p)=M(p)\cap \bar B(0,r)$. According to the proof of Theorem \ref{theorempseudo}, $M(p)\cap B(0,r)\neq \emptyset$ for any $p\in P$. Hence, the map $M_r(p)$ is lower semi-continuous as per \cite[Corollary 1.2.3]{lsc}. Further, $M_r(p)$ is closed map with non-empty convex compact values in the view of hypothesis (i). 
			
			We observe that the maps $G$ and $M_r$ and the function $g$ fulfil all the conditions in \cite[Theorem 5]{ausselradn}. Hence, there exists $(\bar p,\bar y)\in P\times M_r(\bar p)$ satisfying,
			\begin{align*}
				\langle g(\bar y), p-\bar p\rangle\geq 0,&\enspace\text{for any}~ p\in P,\,\text{and}\\
				\exists\, \bar y^*\in G(\bar y), \langle \bar y^*, y-\bar y\rangle\geq 0,&\enspace\text{for any}~ y\in M(\bar p)\cap \bar B(0,r).
			\end{align*}
			Finally, we can affirm that $(\bar p,\bar y)$ solves $QVI(A)$ by following the steps provided in the end of Theorem \ref{theorempseudo}.  \color{black}
		\end{proof}
		\begin{remark}
			%Recently, an existence result is established by Aussel-Sultana \cite{ausselcoer} which ensures the occurence of solutions for the classical QVI problem $QVI(\ref{QVI})$ with unbounded constraint map. 
			We observe that the results in \cite{ausselcoer} will be applicable to show the existence of solution for $QVI(\ref{eqA})$ only if $T$ defined as $T(p,y)=\{(g(y), y^*)|\, y^*\in G(y)\}$ is assumed to be a pseudomonotone (or quasimonotone) map. However, by following example it is clear that the map $T$ need not satisfy pseudomonotonicity (or quasimonotonicity) property if the map $G$ is considered to be pseudomonotone (or quasimonotone). %Theorem  can not be established using the existence result proved in \cite[Theorem 1]{ausselcoer} for the classical quasi-variational inequality with unbounded constraint maps. It can be observed that \cite[Theorem 1]{ausselcoer} is only applicable to ensure the existence of solution for $QVI(\ref{eqA})$ if the map $T:P\times co (M(p)\cap\bar B(0,r))\rightarrow \mathbb{R}$ defined as $T(p,y)=\{g(y)\}\times G(y)$ is fulfills pseudomonotonicity which in general is not true if $G$ is pseudomonotone. For e.g., 
			\begin{example}
				Define $g:\mathbb{R}\rightarrow \mathbb{R}$ as $g(y)=y+1$ and map $G:\mathbb{R}\rightrightarrows \mathbb{R}$ as $G(y)=\{y^2+2\}$. Then, $g$ is an affine function and $G$ is a pseudomonotone map over $\mathbb{R}$. But, the reader can check that $T(p,y)=(y+1,y^2+2)$ fails to be psuedomonotone on the set $[1,5]\times [-1,1]\subseteq \mathbb{R}^2$ by taking the points $(p_1,y_1)=(5,-1)$ and $(p_2,y_2)=(1,1)$.% for .
			\end{example}
			%\item[(b)]  Similarly, Theorem \ref{theoremquasi} can not be established using \cite[Theorem 3]{ausselcoer} as the product map $T$ need not be quasimonotone if the given map $G$ is quasimonotone and $g$ is an affine function. 
			%\end{itemize}
		\end{remark}
		\section{Applications} \label{applications}
		\subsection{Application to a Pure Exchange Economy}\label{application1}
		%We now present an interesting application of Theorem \ref{theorempseudo} and Theorem \ref{theoremquasi} to an economic problem. We establish the existence of equilibrium for pure exchange economy (see \cite{donato, milasi, wets}).
		In this subsection, we provide an application of the existence results derived in Section \ref{mainresult} to a pure exchange economic equilibrium problem \color{black} in which consumption sets are not necessarily bounded from below. The existence of equilibrium in this type of exchange economy, where short-selling of securities is prevalent, was first studied by Hart \cite{hart}. Subsequently, different arbitrage notions were introduced to ensure the existence of equilibrium by generalizing the condition of Hart (see \cite{page,won-yannelis} and references therein). \color{black} We obtain a characterization of these economic equilibrium problems in terms of a quasi-variational inequality which is specific instance of the general framework $QVI(\ref{eqA})$. Then, we demonstrate the occurrence of Walrasian equilibrium by applying Theorem \ref{theoremquasiproduct}. %\color{black} According to Example \ref{unbounded_ex}, these problems need not admit equilibrium under the assumptions same as classical pure exchange economy considered in \cite{wets,donato}. Hence, we show the existence of equilibrium for the considered problems by employing coercivity condition $\mathcal{C}(p)$ and we compare our approach with existing results in \cite{page}.\color{black}
		
		Consider an economic problem consisting of $N$ consumers and $M$ products/goods. Suppose $\mathcal{I}=\{1,2,\cdots,N\}$ and $\mathcal{L}=\{1,2,\cdots, M\}$ denote the sets of consumers and products, respectively.  Considering $p^l$ to be price associated with the product $l\in \mathcal{L}$, we can assume \color{black} $p=(p^1,\cdots, p^M)\in \mathbb{R}^M$ lies in the set $P=\{p\in \mathbb{R}^M|\norm{p}\leq 1\}$ without loss of generality. Suppose the choice set of any agent $i$ denoted by $X_i\subset \mathbb{R}^M$ is non-empty convex closed (but not necessarily bounded from below). \color{black} Let $y_i=(y_i^1,\cdots, y_i^M)\in X_i$ stands for the consumption plan of a consumer $i\in \mathcal{I}$ and $y=(y_1,\cdots, y_N)\in X$ denotes the consumption of market where $X=\prod_{i\in \mathcal{I}} X_i$. According to the classical theory \cite{page,won-yannelis}, any consumer $i\in \mathcal{I}$ is initially endowed with $e_i\in int X_i$ which he/she aims to exchange for the consumption vector $y_i\in X_i$ having higher utility. %Further, the quantity of the product $l$ consumed by a consumer $i$ is denoted as $y_i^l$. 
		%In the established literature (for e.g., \cite{donato, milasi, wets}), 
		The preference of any consumer $i$ with respect to consumption $y_i$ is generally represented by a real valued utility function $u_i:X_i\rightarrow \mathbb{R}$.
		Moreover, for a given price vector $p\in P$, %the wealth of any consumer $i$ is due to market value $\langle p,e_i\rangle$ of the endowments. 
		the expenditure $\langle p,y_i\rangle$ of any consumer $i$ can not exceed his initial wealth $\langle p,e_i\rangle$ due to endowments. This leads to a budget constraint,  $$S_i(p)=\{y_i\in X_i\,|\,\langle p,y_i\rangle\leq \langle p,e_i\rangle\}.$$ 
		Clearly, $S_i:P \rightrightarrows X_i$ admits closed and convex values for any $p\in P$. \color{black} Since $X_i$ is not necessarily bounded from below in a pure exchange asset market economy \cite{hart,page,won-yannelis}, $S_i(p)$ need not be compact.\color{black}
		
		%The economic equilibrium problem aims to find the price vector $\bar p\in P$, under which each consumer $i\in \mathcal{I}$ finds the consumption vector $\bar y_i\in S_i(\bar p)$ maximizing his utility function $u_i$, that is, 
		%\begin{equation*}
		%u_i(\bar y_i)=\max_{y_i\in S_i(\bar p)} u_i(y_i),~\text{for any}~i\in \mathcal{I}.
		%\end{equation*}
		%\color{magenta} There is vast literature of exchange economy in terms of the non-ordered preference maps $P_i:X_i\rightrightarrows X_i$ for any agent $i\in \mathcal{I}$. It admits numerical representation if there exists a function $u_i:X_i\rightarrow \mathbb{R}$ such that,\begin{align} P_i(x_i)=\{y_i\in X_i|\,u_i(y_i)>u_i(x_i)\}.\end{align} It is easy to check that $P_i$ admits convex values if and only if $u_i$ is quasi-concave. \color{black}
		Let us recall the definition of an equilibrium for a pure exchange economy \cite{page,won-yannelis},
		\begin{definition}
			For the price vector $\bar p \in P\setminus\{0\}$ and consumption vector $\bar y=(\bar y_i)_{i\in \mathcal{I}}\in \prod_{i\in \mathcal{I}} S_i(\bar p)$, we say $(\bar p, \bar y)$ is Walrasian equilibrium if,
			\begin{align}
				\label{eqdef1} u_i(\bar y_i)=\max_{y_i\in S_i(\bar p)} u_i(y_i),~&\text{for each}~ i\in \mathcal {I},~\text{and}\\
				\label{eqdef2} \sum_{i\in \mathcal{I}} (\bar y_i^l-e_i^l)=\,0,~&\text{for each}~l\in \mathcal{L}.
			\end{align}
		\end{definition}
		%If $\sum_{i\in \mathcal{I}} (\bar y_i^l-e_i^l)\leq 0,~\text{for each}~l\in \mathcal{L}$ in place of (\ref{eqdef2}), that is, $\bar y$ not necessarily satisfy the market clearing condition then $(\bar p,\bar y)$ forms competitive equilibrium (or Walrasian equilibrium with free disposal) \cite{donato}.
		\color{black} Following \cite{page}, we define the set of rational allocations as,
		\begin{equation}
			\mathcal{A}=\{y\in X|\,\sum_{i\in \mathcal{I}} (y^l_i-e^l_i)=0,\,\forall\,l\in \mathcal{L}~\text{and}~u_i(y_i)\geq u_i(e_i),\,\forall\,i\in \mathcal{I}\}.
		\end{equation} 
		Suppose $\mathcal{A}_i$ is the projection of set $\mathcal{A}$ on $X_i\subseteq\mathbb{R}^M$.
		\color{black}We assume that the utility functions $u_i:X_i\rightarrow \mathbb{R}$ fulfils the following condition:
		\begin{itemize}
			%\item[(A1)] $u_i$ is concave and continuously differentiable for each $i\in \mathcal{I}$; 
			\item[(A)] $u_i$ is quasiconcave, non-satiable over $\mathcal{A}_i$ and continuous.
			%\item[(A2)] for any consumer $i\in \mathcal{C}$ there exists $l\in \mathcal{L}$ such that $q^l\neq 0$ and $e_i^l\neq 0$.
		\end{itemize} \color{black}
		In the available literature, the existence of equilibrium through variational inequalities is shown by considering the consumption sets are bounded below and the utility functions are either concave \cite{wets} or semi-strictly quasiconcave \cite{donato}.
		The quasiconcavity of utility functions is more generalized criteria and preferable when compared to concave utility functions (see \cite[Proposition 3.D.3]{collel}). It is well known that in the case of non-smooth concave function, a VI problem involving the subdifferential operator of the assumed function provides a necessary and sufficient optimality condition (see \cite[Theorem 2.1]{milasi}). But in case of non-smooth quasiconcave function, the adjusted normal operator is required in place of subdifferential operator to obtain any solution of optimization problem through corresponding variational inequality (see \cite{aussel-hadjnormal, ausselnormal}).   %conditions Since the utility functions are considered non-smooth and quasi-concave under the assumption (A2), one can observe that even if the subgradients of such functions are not well-defined   %Our goal is to consider quasi-concave utility functions which are not necessarily smooth. It is worth to mention that gradient is not well-defined for non-smooth quasi-concave utility functions 
		%Since we consider utility functions $u_i$ are semi-strictly quasi-concave, the adjusted normal operators $N^a_{-u_i}$ will be required for the variational formulation of the given economy. %(see \cite{ausselnormal,aussel-hadjnormal} for more details on normal operators). 
		
		\color{black}	Suppose $C\subseteq \mathbb{R}^n$ is convex set and $(C)^{\perp}=\{w^*\in \mathbb{R}^n|\,\langle w^*,w\rangle=\langle w^*,v\rangle,~\forall\, w,v\in C\}$. Let $w,v\in C$ be arbitrary. Then $h:C\rightarrow \mathbb{R}$ is,
		\begin{itemize}
			\item[-] \textit{quasiconcave} if $ h(tw+(1-t)v)\geq \min \{h(w),h(v)\}$ for any $t\in [0,1]$;
			\item[-] \textit{semi-strictly quasiconcave} if it is quasiconcave and $h(tw+(1-t)v)> \min \{h(w),h(v)\}$ for any $t\in (0,1)$ whenever $h(w)\neq h(v)$;
			\item[-] \textit{non-satiated} if for any $y\in C$ there exists $z\in C$ such that $h(z)>h(y)$.
		\end{itemize}
		%As per above definitions, any semi-strictly quasi-concave function is quasi-concave but converse need not be true.
		\color{black}
		It is easy to observe that a function $h$ is quasiconcave over $C$ if and only if the sublevel sets of the corresponding quasiconvex function $-h$, defined as, $S_{-h(w)}=\{v\in C\,|\,(-h)(v)\leq (-h)(w)\}$ is convex for any $w\in C$.  \color{black}Suppose $S^<_{-h(w)}=\{v\in C\,|\,(-h)(v)< (-h)(w)\}$ denotes strict sublevel set. Let $\rho_w=d(w,S^<_{-h(w)})$ for any $w\in C\setminus argmin_{C} (-h)$. %and $\bar B(S^<_{-h(w)},\rho_w)=\{v\in C|\,d(v,S^<_{-h(w)})\leq \rho_w\}$. 
		The adjusted sublevel set $S^a_{-h}$ is defined as \cite[Definition 2.3]{aussel-hadjnormal},
		\begin{align}\label{normaldef}
			S^a_{-h}(w)=\begin{cases}
				S_{-h(w)}\cap \bar B(S^<_{-h(w)},\rho_w),~\text{if}~w\notin argmin_{C} (-h);\\
				S_{-h (w)},~\text{otherwise}.
			\end{cases}
		\end{align}
		It is easy to observe that $S^<_{-h(w)}\subset S^a_{-h}(w)\subset S_{-h(w)}$ for any $w\in C$.
		\color{black}
		The adjusted normal operator $N^a_{-h}:C\rightrightarrows \mathbb{R}^n$ is taken as normal cone to the adjusted sublevel set (see \cite[Definition 2.5]{aussel-hadjnormal}), that is,  
		$$ N^a_{-h}(w)= \{w^*\in \mathbb{R}^n\,|\,\langle w^*, v-w\rangle \leq 0,~\forall\, v\in S^a_{-h}(w)\}.$$
		These adjusted normal operators are locally upper sign-continuous \color{black} over $C\setminus (argmin_{C} (-h))$ \color{black} (see \cite[Proposition 3.5]{aussel-hadjnormal} or \cite[Corollary 3.2]{homidan}) and quasimonotone (see \cite[Proposition 3.3]{aussel-hadjnormal}) for a given quasiconcave function $h$. % and,
		%\begin{equation}
		%	N^a_{-h}(w)= (S_{-h}(w)-w)^\circ =(S^<_{-h}(w)-w)^\circ~\text{for any}~w\notin argmin_{C}{(-h)}.
		%\end{equation}
		%According to classical assumptions each consumer is endowed with each commodity, that is, $e_i^l>0$ for any $i\in \mathcal{C}, l\in \mathcal{L}$ which is not generally true. Hence, we investigate the existence of equilibrium under \textit{weak survivability assumption} \cite{milasi}, that is, we assume,\\
		%(A1) for any consumer $i\in \mathcal{C}$ there exists at least one $l\in \mathcal{L}$ such that $q^l\neq 0$ and $e_i^l\neq 0$. 
		%\begin{remark}
		%Since $p^l\geq 0~\forall l\in \mathcal{L}$ it is easy to perceive that if $(\bar p,\bar y)$ is a competitive equilibrium then following condition holds,
		%\begin{equation}
		%\langle \bar p, \sum_{i\in \mathcal{C}} (\bar y_i^l-e_i^l)\rangle \leq 0\label{eqmodel}
		%\end{equation}
		%\end{remark}
		\color{black} By employing the arguments given in the proof of \cite[Proposition 4.1]{aussel-hadjnormal}, we derive the following result which will be useful in obtaining a characterization of Walrasian equilibrium problem in terms of $QVI(\ref{eqA})$.
		\begin{proposition}\label{prop_appl}
			For any closed convex set $C\subset \mathbb{R}^n$, let $h:C\rightarrow\mathbb{R}$ be continuous quasiconcave function and $K\subset C$ be non-empty closed convex with $(K)^\perp=\{0\}$. Then any vector $\bar y$ solving $VI(N^a_{-h}\setminus \{0\},K)$ satisfies,
			\begin{equation}
				int S^a_{-h}(\bar y)\cap K=\emptyset.
			\end{equation}
		\end{proposition}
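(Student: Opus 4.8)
The plan is to argue by contradiction, mirroring the perturbation argument behind \cite[Proposition 4.1]{aussel-hadjnormal}. Suppose that $int\, S^a_{-h}(\bar y)\cap K\neq\emptyset$ (if the interior is empty the conclusion is immediate), and fix a point $u\in int\, S^a_{-h}(\bar y)\cap K$. Since $\bar y$ solves $VI(N^a_{-h}\setminus\{0\},K)$, there is a \emph{nonzero} vector $\bar y^{*}\in N^a_{-h}(\bar y)\setminus\{0\}$ with $\langle \bar y^{*},v-\bar y\rangle\geq 0$ for every $v\in K$; taking $v=u$ gives $\langle \bar y^{*},u-\bar y\rangle\geq 0$. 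The decisive feature here is that the zero vector has been excluded, so $\bar y^{*}\neq 0$ and hence $\norm{\bar y^{*}}^{2}>0$.

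Next I would exploit the interiority of $u$ together with the defining property of the adjusted normal operator. Because $u\in int\, S^a_{-h}(\bar y)$, there is $\delta>0$ with the open ball $B(u,\delta)\subseteq S^a_{-h}(\bar y)$, so for all sufficiently small $t>0$ the perturbed point $u+t\bar y^{*}$ still lies in $S^a_{-h}(\bar y)$. Recalling that $N^a_{-h}(\bar y)$ is precisely the normal cone to $S^a_{-h}(\bar y)$, the membership $\bar y^{*}\in N^a_{-h}(\bar y)$ yields $\langle \bar y^{*},w-\bar y\rangle\leq 0$ for every $w\in S^a_{-h}(\bar y)$. Applying this with $w=u+t\bar y^{*}$ produces $\langle \bar y^{*},u-\bar y\rangle+t\norm{\bar y^{*}}^{2}\leq 0$.

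Combining the two inequalities gives $0\leq\langle \bar y^{*},u-\bar y\rangle\leq -t\norm{\bar y^{*}}^{2}$, forcing $t\norm{\bar y^{*}}^{2}\leq 0$, which is impossible since $t>0$ and $\bar y^{*}\neq 0$. This contradiction shows that no such $u$ exists, whence $int\, S^a_{-h}(\bar y)\cap K=\emptyset$. The step requiring the most care is the perturbation: one must confirm that moving a small amount from the interior point $u$ in the direction $\bar y^{*}$ keeps the point inside the adjusted sublevel set, and that $\bar y^{*}$ is genuinely nonzero --- the latter being guaranteed exactly by working with $VI(N^a_{-h}\setminus\{0\},K)$ rather than the ordinary variational inequality. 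The standing hypotheses (continuity and quasiconcavity of $h$, and the closed convex set $K$ with $(K)^{\perp}=\{0\}$) are what make the adjusted normal operator well defined and render the non-trivial variational inequality framework meaningful, so that the separating direction $\bar y^{*}$ is available; the short quantitative computation above then closes the argument.
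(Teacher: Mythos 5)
Your computation is internally consistent, and it \emph{would} be a complete (indeed shorter) proof if $int\, S^a_{-h}(\bar y)$ is understood as the interior in the ambient space $\mathbb{R}^n$: then $B(u,\delta)\subseteq S^a_{-h}(\bar y)$ really does contain $u+t\bar y^*$ for all small $t>0$, and the inequality chain $0\leq\langle\bar y^*,u-\bar y\rangle\leq -t\norm{\bar y^*}^2$ gives the contradiction. But notice that under this reading your argument never touches the hypothesis $(K)^\perp=\{0\}$; that should have been a warning sign that you are proving a different statement from the one the paper needs. The paper's proof perturbs differently: since $\bar y^*\neq 0$ and $(K)^\perp=\{0\}$, the linear functional $\langle\bar y^*,\cdot\rangle$ is non-constant on $K$, so there is $y_\circ\in K$ with $\langle\bar y^*,y_\circ-\bar y\rangle>0$; one then slides along the segment $z_t=tz_\circ+(1-t)y_\circ$, which stays in $K$, keeps $\langle\bar y^*,z_t-\bar y\rangle>0$ for $t<1$, and for $t$ close enough to $1$ lands in $S^a_{-h}(\bar y)$, contradicting $\bar y^*\in N^a_{-h}(\bar y)$.

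The reason the segment route matters is that $S^a_{-h}(\bar y)$ is a subset of $C$, and in the way the proposition is actually invoked (proof of Lemma \ref{equivalence}) the interior point is only produced \emph{relative to} $C$: non-satiation yields $x_i\in X_i$ with $B(x_i,\delta)\cap X_i\subseteq S^<_{-u_i(\bar y_i)}$, not $B(x_i,\delta)\subseteq S^a_{-u_i}(\bar y_i)$. Under that relative reading your perturbed point $u+t\bar y^*$ may exit $C$, hence need not belong to $S^a_{-h}(\bar y)$, and the normal-cone inequality $\langle\bar y^*,w-\bar y\rangle\leq 0$ (which is quantified only over $w\in S^a_{-h}(\bar y)$) cannot be applied to it. The paper's argument avoids this because every point it uses lies on a segment between two points of $K\subseteq C$. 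So either state explicitly that you take the ambient interior --- in which case $(K)^\perp=\{0\}$ can be dropped but the conclusion becomes too weak for the application --- or replace the perturbation direction $\bar y^*$ by the direction toward a suitable $y_\circ\in K$, which is exactly what the hypothesis $(K)^\perp=\{0\}$ is there to provide.
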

		\begin{proof}
			On contrary, suppose $z_\circ\in int S^a_{-h}(\bar y)\cap K$. As per assumption, 
			\begin{equation}\label{prop_appl_eq}
				\exists\,\bar y^*\in (N^a_{-h}(\bar y)\setminus \{0\})~\text{s.t.}~	\langle \bar y^*, y-\bar y\rangle \geq 0,~\text{for all}~y\in K.
			\end{equation}
			Since $\bar y^*\notin K^{\perp}$, there exists $y_\circ\in K$ such that $\langle \bar y^*,y_\circ-\bar y\rangle \neq 0$ and hence, $\langle \bar y^*,y_\circ-\bar y\rangle > 0$.
			Suppose $z_t=tz_\circ+(1-t)y_\circ$ then $z_t\in K$ for any $t\in [0,1]$ and from (\ref{prop_appl_eq}) we get,
			\begin{equation}\label{prop_appl_eq2}
				\langle \bar y^*, z_t-\bar y\rangle > 0,~\text{for all}~t\in [0,1).
			\end{equation}
			Since, $z_\circ\in int S^a_{-h}(\bar y)$, there exists $t'\in (0,1)$ such that $z_{t'}\in S^a_{-h}(\bar y)$. This implies, $\langle y^*, z_{t'}-\bar y\rangle\leq 0$ for all $y^*\in N^a_{-h}(\bar y)\setminus \{0\}$ and we get contradiction to (\ref{prop_appl_eq2}).
		\end{proof}
		%	\begin{remark}
			%	In \cite[Proposition 4.1]{aussel-hadjnormal}, it is shown that $S^<_{-h(\bar y)}\cap K=\emptyset$ if $\bar y$ solves $VI(N^a_{-h}\setminus \{0\},K)$. Since $S^<_{-h(\bar y)}\subset int S^a_{-h}(\bar y)$ if $\bar y\notin argmin_{C}(-h)$ (refer proof of Lemma \ref{application}), we improve \cite[Proposition 4.1]{aussel-hadjnormal} for a continuous function $h$.
			%\end{remark}
			\color{black}
			By following the arguments in \cite[Theorem 2.3.1]{won-yannelis}, \color{black} we will now show that the characterization of above Walrasian equilibrium problem can be obtained in terms of the following quasi-variational inequality $QVI(\ref{appl})$, %which seems to be particular instance of the problem $QVI(\ref{eqA})$: 
			\begin{align}
				&\text{find}~ (\bar p, \bar y)\in P\times S(\bar p)~\text{such that, there exists}~\bar y^*\in G(\bar y)~\text{fulfilling},\notag\\
				&~~\langle \sum_{i\in \mathcal{I}}(e_i-\bar y_i), p-\bar p\rangle +\langle \bar y^*, y -\bar y\rangle \geq 0,~\text{for all}~\,(p,y)\in P\times M(\bar p). \label{appl}
			\end{align}
			Here the map $M:P\rightrightarrows X$ is defined as $M(p)=\prod_{i\in\mathcal{I}} M_i(p)$ with $M_i(p)=\{y_i\in X_i\,|\,\langle p,y_i-e_i\rangle\leq 1-\norm{p}\}$ and the map $G:X\rightrightarrows \mathbb{R}^{NM}$ is defined as
			%\begin{itemize}
			%\item[(i)] $G(y)=\prod_{i\in \mathcal{I}} G_i(y_i)= \{(-\nabla u_1(y_1),\cdots-\nabla u_N(y_N))\}$ if the assumption (A1) holds; 
			$G(y)=\prod_{i\in \mathcal{I}} G_i(y_i)=(N^a_{-u_1}(y_1)\setminus \{0\},\cdots N^a_{-u_N}(y_N)\setminus \{0\})$.
			%\item[(A2)] for any consumer $i\in \mathcal{C}$ there exists $l\in \mathcal{L}$ such that $q^l\neq 0$ and $e_i^l\neq 0$.
			%\end{itemize} 
			%In the available literature, utility function are either assumed to be concave \cite{milasi, wets} or semi-strictly quasi-concave \cite{donato}. However, we assume that one of the following condition holds for utility functions,
			%Now we show that any solution of following quasi-variational inequality (\ref{appl}) having the form similar to $QVI(\ref{eqA})$ is an equilibrium vector for given economic problem if one of the assumption $(A1)$ or $(A2)$ holds.
			%\begin{remark}
			%In case of non-differentiable quasi-concave utility functions, we have defined the map $G$ in terms of well known adjusted normal operators (see \cite{aussel-hadjnormal} for more details) which are frequently used in quasi-convex optimization \cite{ausselnormal, ausselQVI, ausselradn}.
			%\end{remark}
			
			%We assume that $\mathcal{A}_i$ is projection of the set $\mathcal{A}$ on $X_i$.
			
			\begin{lemma}\label{equivalence}
				Suppose the assumption (A) is fulfilled by $u_i$ for any $i\in \mathcal{I}$. If the vector $(\bar p, \bar y)$ solves the quasi-variational inequality $QVI(\ref{appl})$, then it is Walrasian equilibrium.
			\end{lemma}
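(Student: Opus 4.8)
The plan is to show that a solution $(\bar p,\bar y)$ of $QVI(\ref{appl})$ satisfies the two defining conditions (\ref{eqdef1}) and (\ref{eqdef2}) of a Walrasian equilibrium, together with $\bar p\neq 0$. Since a solution lies in $P\times S(\bar p)$ and $S_i(\bar p)\subseteq M_i(\bar p)$ (because $\norm{\bar p}\leq 1$ forces $1-\norm{\bar p}\geq 0$), we have $\bar y\in M(\bar p)$, so both specializations below of the defining inequality are legitimate. Writing $\bar y^*=(\bar y_1^*,\ldots,\bar y_N^*)$ with $\bar y_i^*\in N^a_{-u_i}(\bar y_i)\setminus\{0\}$ and using the product structure $M(\bar p)=\prod_{i}M_i(\bar p)$, I would first fix $p=\bar p$ and let only the $i$-th block of $y$ vary; this decouples the consumption part into, for each $i\in\mathcal I$,
$$\langle \bar y_i^*,\,y_i-\bar y_i\rangle\geq 0,\quad\text{for all }y_i\in M_i(\bar p),$$
so that $\bar y_i$ solves $VI(N^a_{-u_i}\setminus\{0\},M_i(\bar p))$ with witness $\bar y_i^*$. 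Fixing instead $y=\bar y$ isolates the price part
$$\left\langle \sum_{i\in\mathcal I}(e_i-\bar y_i),\,p-\bar p\right\rangle\geq 0,\quad\text{for all }p\in P.$$

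For the consumption step I would apply Proposition \ref{prop_appl} with $C=X_i$, $h=u_i$ (continuous and quasiconcave by assumption (A)) and $K=M_i(\bar p)$, which is a non-empty closed convex subset of $X_i$. The hypothesis $(K)^\perp=\{0\}$ holds because $e_i\in int X_i$ and $e_i\in M_i(\bar p)$: a neighbourhood of $e_i$ in $X_i$ meets $M_i(\bar p)$ in a full-dimensional set (a half-ball when $\norm{\bar p}=1$, a full ball otherwise), so the affine hull of $M_i(\bar p)$ is all of $\mathbb{R}^M$. The proposition then gives $int\,S^a_{-u_i}(\bar y_i)\cap M_i(\bar p)=\emptyset$. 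To convert this into the utility-maximization condition (\ref{eqdef1}) I would rule out any $y_i\in M_i(\bar p)$ with $u_i(y_i)>u_i(\bar y_i)$: for such a point the perturbations $y_i^\varepsilon=(1-\varepsilon)y_i+\varepsilon e_i$ lie in $int X_i$ for $\varepsilon\in(0,1]$ (as $e_i\in int X_i$), lie in $M_i(\bar p)$ by convexity, and by continuity of $u_i$ still satisfy $u_i(y_i^\varepsilon)>u_i(\bar y_i)$ for small $\varepsilon$; since $y_i^\varepsilon\in int X_i$, a whole $\mathbb{R}^M$-neighbourhood of $y_i^\varepsilon$ lies in $S^<_{-u_i(\bar y_i)}\subseteq S^a_{-u_i}(\bar y_i)$, so $y_i^\varepsilon\in int\,S^a_{-u_i}(\bar y_i)\cap M_i(\bar p)$, a contradiction. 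Hence $u_i(\bar y_i)\geq u_i(y_i)$ for all $y_i\in M_i(\bar p)\supseteq S_i(\bar p)$, and since $\bar y_i\in S_i(\bar p)$ this is exactly (\ref{eqdef1}).

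For market clearing I would set $z=\sum_{i\in\mathcal I}(\bar y_i-e_i)$. The price inequality reads $\langle z,p\rangle\leq\langle z,\bar p\rangle$ for all $p\in P=\bar B(0,1)$, so $\bar p$ maximizes $\langle z,\cdot\rangle$ over the unit ball and $\langle z,\bar p\rangle=\max_{p\in P}\langle z,p\rangle=\norm{z}$. On the other hand $\bar y_i\in S_i(\bar p)$ gives $\langle\bar p,\bar y_i-e_i\rangle\leq 0$ for each $i$, hence $\langle z,\bar p\rangle\leq 0$; combining the two yields $\norm{z}\leq 0$, i.e. $z=0$, which is (\ref{eqdef2}). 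Finally, to exclude $\bar p=0$: if $\bar p=0$ then $M_i(0)=X_i$, so the consumption step forces $\bar y_i$ to maximize $u_i$ over all of $X_i$; together with $z=0$ and $u_i(\bar y_i)\geq u_i(e_i)$ (as $e_i\in X_i$) this places $\bar y$ in the rational-allocation set $\mathcal A$, so $\bar y_i\in\mathcal A_i$, and non-satiation of $u_i$ over $\mathcal A_i$ produces $z_i\in\mathcal A_i\subseteq X_i$ with $u_i(z_i)>u_i(\bar y_i)$, contradicting maximality over $X_i$. Thus $\bar p\neq 0$ and $(\bar p,\bar y)$ is a Walrasian equilibrium.

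I expect the main obstacle to be the bridge carried out in the second paragraph: the conclusion of Proposition \ref{prop_appl} is purely geometric (that $int\,S^a_{-u_i}(\bar y_i)$ misses $M_i(\bar p)$), and turning it into a statement about strict utility improvement is delicate precisely because $S^<_{-u_i(\bar y_i)}$ need not be open in $\mathbb{R}^M$ at boundary points of $X_i$. The perturbation of a feasible improving point toward the interior endowment $e_i$ is what repairs this, and it is essential here both that $e_i\in int X_i$ and that $M_i(\bar p)$ is convex and contains $e_i$. By comparison, the decoupling, market clearing, and the exclusion of $\bar p=0$ are short once the utility-maximization property is in hand.
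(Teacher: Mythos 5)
Your argument is correct and shares the paper's skeleton --- decouple $QVI(\ref{appl})$ into its price and consumption components, verify $(M_i(\bar p))^{\perp}=\{0\}$ from $e_i\in int X_i$, apply Proposition \ref{prop_appl} to get $int\, S^a_{-u_i}(\bar y_i)\cap M_i(\bar p)=\emptyset$, and read off market clearing from the price component --- but two key steps are executed genuinely differently. First, to convert the geometric conclusion of Proposition \ref{prop_appl} into utility maximization, the paper relies on the inclusion $S^<_{-u_i(\bar y_i)}\subset int\, S^a_{-u_i}(\bar y_i)$ together with $\norm{\bar p}=1$, the latter obtained via non-satiation and the active-budget-constraint identity (\ref{lemma_eq2}); the inclusion itself is only asserted there and is delicate at boundary points of $X_i$. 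Your perturbation of an improving point toward $e_i\in int X_i$ (which pushes it into $int X_i$, where the strict sublevel set is genuinely open, while staying in the convex set $M_i(\bar p)$) sidesteps this and delivers maximality over all of $M_i(\bar p)\supseteq S_i(\bar p)$ without needing $\norm{\bar p}=1$; this is the tighter route. Correspondingly, you settle non-degeneracy by excluding only $\bar p=0$ through the rational-allocation set $\mathcal A$ and non-satiation over $\mathcal A_i$, whereas the paper proves the stronger fact $\norm{\bar p}=1$; both suffice for the definition. One caveat: your market-clearing step uses $\bar y_i\in S_i(\bar p)$ (so $\langle\bar p,\bar y_i-e_i\rangle\leq 0$), which is licensed by the literal formulation of $QVI(\ref{appl})$, but the existence result (Theorem \ref{application}) produces solutions only known to lie in $M(\bar p)$, with membership in $S(\bar p)$ recovered a posteriori; the paper's version of this step, testing $p=z/\norm{z}$ and using only $\bar y_i\in M_i(\bar p)$, is the one that chains cleanly with Theorem \ref{theoremquasiproduct}, and you should be aware your argument needs that minor adjustment to be used in the same way.
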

			
			\begin{proof}
				\color{black}
				Firstly, we show that (\ref{eqdef2}) holds, that is, $z^l=\sum_{i\in\mathcal{I}}(\bar y^l_i-e^l_i)=0$ for each $l$. On contrary, suppose $z\neq 0$. By taking $y=\bar y$ and $p=\frac{z}{\norm{z}}$ in (\ref{appl}), we get,
				\begin{equation}
					\langle \sum_{i\in \mathcal{I}} (\bar y_i-e_i), \bar p\rangle \geq \langle \sum_{i\in \mathcal{I}} (\bar y_i-e_i), p\rangle=||\sum_{i\in\mathcal{I}}(\bar y_i-e_i)||>0.\label{yi}
				\end{equation} 
				In particular, this shows $\norm{\bar p}= 1$. But, the fact $\bar y_i\in M_i(\bar p)$ for any $i$ along with (\ref{yi}) leads to a contradiction as $0<\sum_{i\in \mathcal{I}} \langle (\bar y_i-e_i), \bar p \rangle \leq N(1-\norm{\bar p})=0$. Hence, $z=0$.
				
				We claim that $\bar y$ satisfies (\ref{eqdef1}). We show that $(M_i(p))^{\perp}=\{0\}$ for any $p\in P$ and $i\in \mathcal{I}$. Since $e_i\in int X_i$, we obtain $\epsilon>0$ such that $e_i\in B(e_i,\epsilon)\subset X_i$. Define $x_i$ as,
				\begin{align}\label{xi}
					x_i=\begin{cases}
						e^l_i+\frac{1}{n},~&\text{if}~p^l<0,\\
						e^l_i,~&\text{if}~p^l=0,\\
						e^l_i-\frac{1}{n},~&\text{otherwise}.
					\end{cases}
				\end{align} 
				Then, $x_i\in B(e_i,\epsilon)\subset X_i$ for sufficiently large $n$ and $\langle p,x_i-e_i\rangle< 1-\norm{p}$. Hence, we obtain $\delta>0$ such that $B(x_i,\delta)\in M_i(p)$. This shows affine hull of $M_i(p)=\mathbb{R}^M$ and $(M_i(p))^{\perp}=\{0\}$ as per \cite{aussel-hadjnormal}. From Proposition \ref{prop_appl}, we observe $(\bar p,\bar y)$ satisfies,
				\begin{equation}\label{lemma_proof}
					int S^a_{-u_i}(\bar y_i)\cap M_i(\bar p)=\emptyset.
				\end{equation}
				We see (\ref{eqdef1}) follows from (\ref{lemma_proof}) if $S^<_{-u_i(\bar y_i)}\subset int S^a_{-u_i}(\bar y_i)$ and $\norm{\bar p}=1$. For this, we show
				\begin{equation}\label{lemma_eq2}
					\langle \bar p,\bar y_i-e_i\rangle =1-\norm{\bar p},~\text{for any}~i\in \mathcal{I}. 
				\end{equation} %On contrary, suppose $\langle \bar p,\bar y_i-e_i\rangle <1-\norm{\bar p}$. 
				Since $u_i$ is non-satiable over $\mathcal{A}_i$, there exists $x_i\in X_i$ such that $u_i(x_i)>u_i(\bar y_i)$. It is easy to observe that $x_i\in int S^a_{-u_i}(\bar y_i)$. In fact, we obtain $\delta>0$ such that $B(x_i,\delta)\cap X_i\subseteq S^<_{-u_i(\bar y_i)}\subseteq S^a_{-u_i}(\bar y_i)$ by continuity of $u_i$. %Hence, $d(x'_i,S^<_{-u_i(\bar y_i)})=0\leq \rho_{\bar y_i}$, for any $x'_i\in B(x_i,\delta)\cap X_i$ and from (\ref{normaldef}), $B(x_i,\delta)\cap X_i\subset S^a_{-u_i}(\bar y_i)$. %By following \cite[Section 4]{homidan}, 
				Since $\bar{y}_i\in S^a_{-u_i}(\bar y_i)$ (see (\ref{normaldef})), we observe $\bar y_i^t=tx_i+(1-t)\bar y_i\in int S^a_{-u_i}(\bar y_i)$ for all $t\in (0,1]$. If (\ref{lemma_eq2}) is not true, then $\langle \bar p,\bar y_i-e_i\rangle <1-\norm{\bar p}$ and we obtain $t^\circ\in (0,1)$ such that $\bar y_i^{t^\circ}\in M_i(\bar p)$. Hence, $\bar y_i^{t^\circ}\in int S^a_{-u_i}(\bar y_i)\cap M_i(\bar p)$ and we get contradiction to (\ref{lemma_proof}).  % $tx_i+(1-t)\bar y_i\in S'_i(\bar p)$. But, this contradicts (\ref{lemma_proof}).
				Finally, the fact $z=0$ along with (\ref{lemma_eq2}) shows $0=\langle \bar p,z\rangle =N(1-\norm{\bar p})$ and $\norm{\bar p}=1$.
				%since $\bar y_i\in S_i(\bar p)$ for any $i\in \mathcal{I}$. 
				%By taking  in (\ref{yi}), we obtain $\norm{z}\leq 0$. Hence, $\sum_{i\in\mathcal{I}}(\bar y_i-e_i)=0$. %Additionally, if $u_i$ are non-satiated and semi-strictly quasi-concave one can follow the arguments in \cite[Lemma 4.1]{cotrinatime} to prove that $\sum_{i\in\mathcal{I}}\langle\bar p,\bar y_i-e_i\rangle=0$. Suppose $z=\sum_{i\in\mathcal{I}}(\bar y_i-e_i)\neq 0$. By taking $p=\frac{z}{\norm{z}}$ and $y=\bar y$ in (\ref{appl}), we obtain $\norm{z}\leq 0$. Hence, $\sum_{i\in\mathcal{I}}(\bar y_i-e_i)=0$.
			\end{proof}
			
			\color{black}
			Let us recall the following result from \cite{lsc}, which will be helpful in proving the existence of equilibrium:
			\begin{lemma}\cite[Corollary 1.2.4]{lsc} \label{lsc}
				Let $U$ be a metric space, $V$ and $W$ be normed spaces. Suppose that:
				\begin{itemize}
					\item[(a)] $f:U\times W \rightarrow V$ is a continuous function and the function $f(p,.):W\rightarrow V$ is affine for any $p\in U$;
					\item[(b)] the maps $T:U\rightrightarrows V$ and $F:U\rightrightarrows W$ fulfil the lower semi-continuity and admit convex and closed values;
					\item[(c)] for each point $p\in U$, there exists $x\in F(p)$ such that $f(p,x)\in int T(p)$.
				\end{itemize}
				Then the map $R:U\rightrightarrows W$ defined as, $R(p)=\{x\in F(p)|f(p,x)\in T(p)\}$ fulfils the lower semi-continuity property and admits closed convex values.
			\end{lemma}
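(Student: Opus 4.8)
The plan is to verify the two pointwise properties of $R$ by elementary set manipulations and then to establish lower semi-continuity through a Slater-type construction driven by hypothesis (c). For the pointwise properties, fix $p\in U$ and note that $R(p)=F(p)\cap f(p,\cdot)^{-1}(T(p))$. Since $f(p,\cdot)$ is affine and $T(p)$ is convex, the preimage $f(p,\cdot)^{-1}(T(p))$ is convex; since $f(p,\cdot)$ is continuous and $T(p)$ is closed, it is also closed. Intersecting with the closed convex set $F(p)$ shows that $R(p)$ is closed and convex, so this part needs no further work.

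For lower semi-continuity I would use the sequential characterization valid in metric spaces: fix $p_0\in U$, a point $x_0\in R(p_0)$ and an arbitrary sequence $p_n\to p_0$, and produce $x_n\in R(p_n)$ with $x_n\to x_0$. The essential device is the \emph{Slater point} from (c): choose $\hat x\in F(p_0)$ with $f(p_0,\hat x)\in int\, T(p_0)$. For $\lambda\in(0,1]$ put $x^\lambda=(1-\lambda)x_0+\lambda\hat x$; convexity of $F(p_0)$ gives $x^\lambda\in F(p_0)$, and the affinity of $f(p_0,\cdot)$ together with the line-segment principle (the half-open segment from a point of a convex set to one of its interior points stays in the interior) yields $f(p_0,x^\lambda)\in int\, T(p_0)$. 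Since $x^\lambda\to x_0$ as $\lambda\to 0^+$, a diagonal argument reduces the task to approximating each $x^\lambda$ by elements of $R(p_n)$.

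Fix such a $\lambda$ and set $v^\lambda=f(p_0,x^\lambda)\in int\, T(p_0)$. The lower semi-continuity of $F$ at $p_0$ furnishes $u_n\in F(p_n)$ with $u_n\to x^\lambda$, and continuity of $f$ gives $f(p_n,u_n)\to v^\lambda$. To conclude $u_n\in R(p_n)$ it remains to force $f(p_n,u_n)\in T(p_n)$ for large $n$, and here the interior condition is decisive. I would isolate the auxiliary claim that, whenever $T$ is lower semi-continuous at $p_0$ with convex values and $v^\lambda\in int\, T(p_0)$, there exist a neighbourhood of $p_0$ and $\delta>0$ with $\bar B(v^\lambda,\delta)\subseteq T(p)$ throughout that neighbourhood. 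Granting it, $f(p_n,u_n)\in\bar B(v^\lambda,\delta)\subseteq T(p_n)$ for large $n$, hence $u_n\in R(p_n)$; diagonalizing over $\lambda\to 0^+$ produces $x_n\to x_0$ and establishes that $R$ is lower semi-continuous.

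The main obstacle is precisely this auxiliary claim, that an interior point of $T(p_0)$ persists, with a uniform ball, in the nearby values $T(p)$. I would prove it by choosing finitely many points of $T(p_0)$ whose convex hull contains a ball $\bar B(v^\lambda,2\rho)$, approximating each of them by points of $T(p)$ through the lower semi-continuity of $T$, and using convexity of $T(p)$ together with the stability of a simplex under small perturbations of its vertices to conclude that a slightly smaller ball about $v^\lambda$ remains in $T(p)$. This argument is clean in the finite-dimensional setting in which the result is applied; the one point demanding care is that the simplex construction genuinely relies on finite dimensionality, so in a general normed space this step must be handled more delicately (in the applications one simply has $V=\mathbb{R}^m$).
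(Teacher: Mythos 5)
The paper does not actually prove this lemma: it is imported verbatim as Corollary 1.2.4 of Kamenskii--Obukhovskii--Zecca \cite{lsc}, so there is no in-paper argument to compare yours against. Judged on its own terms, your proof is the standard Slater-point argument for constraint maps of this type, and it is sound in the setting where the paper invokes the lemma. The pointwise part is immediate, exactly as you say: $R(p)=F(p)\cap f(p,\cdot)^{-1}(T(p))$, and the preimage of a closed convex set under a continuous affine map is closed and convex. The line-segment step giving $f(p_0,x^\lambda)\in int\, T(p_0)$ is a correct use of affinity of $f(p_0,\cdot)$ plus the accessibility lemma for convex sets, and the diagonalization over $\lambda\to 0^+$ is routine once each $x^\lambda$ has been approximated (condition (c) also guarantees $R(p_n)\neq\emptyset$, so the finitely many initial indices cause no trouble).

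The one substantive issue is the auxiliary claim, and you have located it precisely. For finite-dimensional $V$ your simplex argument closes it: pick $w_0,\dots,w_m\in T(p_0)$ with $\bar B(v^\lambda,2\rho)\subseteq co\{w_0,\dots,w_m\}$, use lower semi-continuity of $T$ at these finitely many points to find $w_i(p)\in T(p)$ with $\norm{w_i(p)-w_i}\leq\eta$ for $p$ near $p_0$, and a comparison of support functions shows $co\{w_i(p)\}\supseteq \bar B(v^\lambda,2\rho-\eta)$, which sits inside $T(p)$ by convexity. But for infinite-dimensional $V$ the auxiliary claim is genuinely false, not merely harder: in $\ell^2$ one can take $T(p_0)=\bar B(0,1)$ and let the nearby values be convex hulls of finite nets of ever larger finite-dimensional sections of the ball; this map is lower semi-continuous with closed convex values, yet none of the nearby values contains any ball. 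So, as a proof of the lemma in the generality in which it is stated (arbitrary normed $V$), your argument has a gap that cannot be repaired along the route you describe, and a different mechanism would be required there. Since the paper applies the lemma only with $V=\mathbb{R}$, $T(p)=(-\infty,0]$ and finite-dimensional $W$, this caveat is immaterial to its use here, and the finite-dimensional version you do prove fully covers what the paper needs.
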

			
			Now, we prove the existence of Walrasian equilibrium for a given pure exchange economy by applying Lemma \ref{equivalence} and Theorem \ref{theoremquasiproduct}. %equivalence of pure exchange economic model with the given class of quasi-variational inequality as stated in Lemma \ref{equivalence}. 
			\color{black}
			\begin{theorem}\label{application}
				The given pure exchange economy admits equilibrium if for each $i\in \mathcal{I}$:
				\begin{itemize}
					\item[(a)] $u_i$
					satisfies the assumption (A) such that the map $N^a_{-u_i}$ fulfils the local upper sign-continuity and dual lower semi-continuity over the set $co(M_i(P))$;
					\item[(b)] there exists $\rho_i>0$ such that %$M_i(p)\cap \bar B(0,\rho_i)\neq \emptyset$ 
					for any $p\in P$ and $y_i\in M_i(p)$ with $\norm{y_i}>\rho_i$ there exists $z_i\in M_i(p)$ with $\norm{z_i}\leq  \rho_i$ satisfying:
					\begin{equation}\label{coercive}
						u_i(z_i)>u_i(y_i).
					\end{equation}
					%if $y_i\notin argmin_{X_i}(-u_i)$, and $u_i(z_i)=u_i(y_i)$ otherwise.
				\end{itemize}		
			\end{theorem}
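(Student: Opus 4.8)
The plan is to exhibit $QVI(\ref{appl})$ as a product-structured instance of $QVI^*(\ref{eqB*})$ and then to verify, one by one, the hypotheses of Theorem \ref{theoremquasiproduct}; once a solution is produced, Lemma \ref{equivalence} immediately upgrades it to a Walrasian equilibrium. Concretely, I would take $m=M$, $n_i=M$ for each $i\in\mathcal{I}$ (so $n=NM$), the affine map $g(y)=\sum_{i\in\mathcal{I}}(e_i-y_i)$, the component maps $G_i=N^a_{-u_i}\setminus\{0\}$ and $M_i(p)=\{y_i\in X_i\,|\,\langle p,y_i-e_i\rangle\leq 1-\norm{p}\}$, and $P=\{p\in\mathbb{R}^M\,|\,\norm{p}\leq 1\}$. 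Since $g$ is affine and $P$ is the closed unit ball, hence non-empty, convex and compact, it remains to check the three itemised assumptions on $(M_i,G_i)$ together with the ball-intersection requirement.

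For hypothesis (i), convexity of $M_i(p)$ is immediate since it is the intersection of the convex set $X_i$ with the affine half-space $\{y_i\,|\,\langle p,y_i-e_i\rangle\leq 1-\norm{p}\}$, and the graph of $M_i$ is closed by a routine limit argument using closedness of $X_i$ and continuity of $(p,y_i)\mapsto\langle p,y_i-e_i\rangle-(1-\norm{p})$. Non-emptiness of $int\,M_i(p)$ follows from the strictly feasible point $x_i\in int\,X_i$ with $\langle p,x_i-e_i\rangle<1-\norm{p}$ already built in the proof of Lemma \ref{equivalence} (recall $e_i\in int\,X_i$). The only delicate part is lower semi-continuity of $M_i$, which I would obtain from Lemma \ref{lsc} applied with $U=P$, $W=\mathbb{R}^M$, $V=\mathbb{R}$, the continuous map $f(p,y_i)=\langle p,y_i-e_i\rangle$ (affine in $y_i$), the constant map $F(p)=X_i$ (lower semi-continuous with closed convex values), and the lower semi-continuous map $T(p)=(-\infty,1-\norm{p}]$; condition (c) of that lemma is precisely the strict feasibility just recorded, so $M_i$ coincides with the resulting map $R$ and is lower semi-continuous with closed convex values.

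Hypothesis (ii) is largely supplied by the standing assumptions: local upper sign-continuity and dual lower semi-continuity of $N^a_{-u_i}$ over $co(M_i(P))$ are granted in (a), quasimonotonicity of $N^a_{-u_i}$ follows from the quasiconcavity of $u_i$ (as recalled before Proposition \ref{prop_appl}), and deleting $\{0\}$ preserves quasimonotonicity, so $G_i$ qualifies. The heart of the argument is converting the utility-based coercivity (b) into the operator form demanded by hypothesis (iii). Given $p\in P$ and $y_i\in M_i(p)$ with $\norm{y_i}>\rho_i$, condition (b) yields $z_i\in M_i(p)$ with $\norm{z_i}\leq\rho_i<\norm{y_i}$ and $u_i(z_i)>u_i(y_i)$; the strict inequality places $z_i$ in the strict sublevel set $S^<_{-u_i(y_i)}\subseteq S^a_{-u_i}(y_i)$, whence the definition of the adjusted normal cone gives $\langle y_i^*,z_i-y_i\rangle\leq 0$ for every $y_i^*\in N^a_{-u_i}(y_i)$. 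This is exactly hypothesis (iii) with $r^i_p=\rho_i$, uniformly in $p$.

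For the ball-intersection condition I would observe that $e_i\in M_i(p)$ for every $p\in P$, because $\langle p,e_i-e_i\rangle=0\leq 1-\norm{p}$; hence any choice $r_i>\max\{\rho_i,\norm{e_i}\}$ gives $r_i>\sup\{r^i_p\,|\,p\in P\}$ and $e_i\in M_i(p)\cap\bar B(0,r_i)$ for all $p$. With every hypothesis in force, Theorem \ref{theoremquasiproduct} furnishes a solution of $QVI^*(\ref{eqB*})$, which by construction is a solution $(\bar p,\bar y)$ of $QVI(\ref{appl})$, and Lemma \ref{equivalence} then certifies that $(\bar p,\bar y)$ is a Walrasian equilibrium. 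I expect the two main obstacles to be establishing the lower semi-continuity of $M_i$ through Lemma \ref{lsc} and, above all, the faithful translation of the economically natural coercivity hypothesis (b) into the normal-operator coercivity (iii) via the adjusted sublevel set.
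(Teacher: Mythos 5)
Your proposal is correct and follows essentially the same route as the paper: cast $QVI(\ref{appl})$ as a product-structured instance of $QVI^*(\ref{eqB*})$, verify the hypotheses of Theorem \ref{theoremquasiproduct} (with lower semi-continuity of $M_i$ obtained from Lemma \ref{lsc} via the strictly feasible point near $e_i$, and the coercivity (iii) obtained from (b) through $z_i\in S^<_{-u_i(y_i)}\subseteq S^a_{-u_i}(y_i)$ with $r^i_p=\rho_i$ and $r_i>\max\{\rho_i,\norm{e_i}\}$), and conclude with Lemma \ref{equivalence}. The only cosmetic differences are your choice $T(p)=(-\infty,1-\norm{p}]$ in Lemma \ref{lsc} instead of the paper's $f(p,x_i)=\langle p,x_i-e_i\rangle-(1-\norm{p})$ with $T(p)=(-\infty,0]$, and your direct appeal to the definition of $N^a_{-u_i}$ where the paper cites a corollary of \cite{aussel-hadjnormal}.
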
\color{black}
			\begin{proof}
				In order to ensure the existence of equilibrium, it is enough to show that $QVI(\ref{appl})$ admits a solution according to Lemma \ref{equivalence}. One can notice that $QVI(\ref{appl})$ is a particular instance of the considered problem $QVI^*(\ref{eqB*})$, where $g:X\rightarrow \mathbb{R}^M$ is formed as 
				$g(y)= \sum_{i\in \mathcal{I}}(e_i-y_i)$, the maps $M:P\rightrightarrows X$ and $G:X\rightrightarrows \mathbb{R}^{NM}$ are, respectively, defined as $M(p)=\prod_{i\in\mathcal{I}} M_i(p)$ and $G(y)=\prod_{i\in \mathcal{I}} G_i(y_i)=\prod_{i\in \mathcal{I}}N^a_{-u_i}(y_i)$.
				% every solution of quasi-variational inequality $QVI(\ref{appl})$ is an economic equilibrium. Hence, 
				
				We derive the occurrence of solution for $QVI(\ref{appl})$ by employing \color{black} Theorem \ref{theoremquasiproduct}. \color{black} In this regard, let us observe that the function $g$ is an affine function and $P\subset \mathbb{R}^M$ is a non-empty convex compact set. We aim to show that for any $i\in \mathcal{I}$, the map $M_i$ is convex valued, lower semi-continuous and closed map where the interior of the set $M_i(p)$ is non-empty for each $p\in P$. The reader may verify that $M_i$ is a closed map by following its definition. To prove $M_i$ is a lower semi-continuous map, let us consider $U=P$, $V=\mathbb{R}$, \color{black} $W=X_i$, $f(p,x_i)=\langle p,x_i-e_i\rangle-(1-\norm{p})$, \color{black} $T(p)=(-\infty,0]$ and \color{black} $F(p)=X_i$ \color{black} in Lemma \ref{lsc}. \color{black}For any $p\in P$, we obtain $x_i\in F(p)$ from (\ref{xi}) such that $f(p,x_i)\in int T(p)$. \color{black} %As we know $\langle p,e_i\rangle >0$ for each $p\in P$, we obtain a vector $x_i=\frac{e_i}{2}\in F(p)$ such that $f(p,x_i)\in int T(p)$. 
				Eventually, we can conclude from Lemma \ref{lsc} that $R\equiv M_i$ is a convex valued lower semi-continuous map. \color{black}Furthermore, for any $p\in P$, we observe that $x_i$ defined in (\ref{xi}) is in $int M_i(p)$ as per the proof of Lemma \ref{equivalence}. \color{black}
				We observe that the assumptions of Theorem \ref{theoremquasiproduct} are satisfied. \color{black}In fact, the map $G_i$ fulfils quasimonotonicity property as per \cite[Proposition 3.3(ii)]{aussel-hadjnormal}. \color{black} We claim that the coercivity condition $(iii)$ in Theorem \ref{theoremquasiproduct} is satisfied for any $i\in \mathcal{I}$. Suppose \color{black}$r^i_p=\rho_i$ \color{black} for any given $p\in P$, then due to our hypothesis for any \color{black}$y_i\in M_i(p)\setminus \bar B(0,\rho_i)$, there exists $z_i\in M_i(p)$ with $\norm {z_i}\leq \rho_i<\norm {y_i}$ \color{black} satisfying $u_i(z_i)>u_i(y_i)$ for each $i\in \mathcal{I}$. This further results into $z_i\in S_{-u_i}^<(y_i)$ for each $i\in \mathcal{I}$. Finally, for each $i\in \mathcal I$ and any $y_i^*\in N_{-u_i}^a(y_i)\subseteq N_{-u_i}^<(y_i)$, we get
				$\langle y^*_i,z_i-y_i\rangle \leq 0$ (see \cite[Corollary 4.4]{aussel-hadjnormal}). %\color{black}In case $y_i\in argmin_{X_i}(-u_i)$, we have $z_i\in S_{-u_i}(y_i)=S^a_{-u_i}(y_i)$ due to hypothesis (b) and $\langle y^*_i,z_i-y_i\rangle \leq 0$ for any $y_i^*\in N_{-u_i}^a(y_i)$. \color{black}
				%Hence, 
				%$$ \langle y^*,z-y\rangle = \sum_{i\in \mathcal{I}}\langle y_i^*, z_i-y_i\rangle \leq 0~\text{for any}~y^*\in G(y).$$
				
				%(see \cite{aussel-hadjnormal}). \color{black} %On the other hand, if $y_i\in argmax_{X_i}u_i$ then there exists $z_i\in M_i(p)$ with $\norm{z_i}<\norm{y_i}$ satisfying $u_i(z_i)=u_i(y_i)$. Hence, $z_i\in S_{-u_i}(y_i)=S^a_{-u_i}(y_i)$ and 
				
				Evidently, the condition $\exists\, r_i>sup \{r^i_p:\,p\in P\}$ such that $M_i(p)\cap \bar B(0,r_i) \neq \emptyset$ for any $p\in P$ is fulfilled \color{black} by choosing $r_i>\max\{\norm{e_i},\rho_i\}$. \color{black} Hence, we ensure the existence of equilibrium by applying Theorem \ref{theoremquasiproduct}.
			\end{proof}
				\color{black}
				%\begin{remark}
				%	\item[(i)] The coercivity condition (\ref{coercive}) in Theorem \ref{application} is satisfied whenever one of the following hold, (i) $u_i(z_i)>u_i(y_i)$ if $y_i\notin argmin_{X_i}(-u_i)$ and $u_i(z_i)=u_i(y_i)$ otherwise or; (ii) $u_i$ is semi-strictly quasi-concave and $u_i(z_i)\geq u_i(y_i)$.
				%\item[(i)] If $u_i$ is non-satiable over $X_i$ then the map  $N^a_{-u_i}$ is locally upper sign-continuous as per \cite[Corollary 3.2(i)]{homidan}. However, it is clear from Example \ref{exappl} that the non-satiability of $u_i$ over $X_i$ is too strong to be assumed.
				%Although the existence of equilibrium in \cite{cotrinatime} is proved by using a coercivity condition similar to Theorem \ref{application}, we observe that our result does not follow from \cite[Theorem 4.1]{cotrinatime} as consumption sets are bounded from below in \cite{cotrinatime}. 
				%\end{remark}
				%If $u_i$ are continuous semi-strictly quasi-concave and non-satiable then we can compare our coercivity condition with the \textit{compactness with partial preorder (CPP)} and \textit{no unbounded arbitrage} conditions existing in literature.
				%The coercivity conditions are not required to show the existence of equilibrium in \cite{donato} because $X_i$ are bounded from below. 
				\begin{remark} \label{weakex}
					One can derive Theorem \ref{application} by replacing hypothesis $(b)$ with a weaker condition $(b')\,\exists\rho_i>0$ s.t. %$M_i(p)\cap \bar B(0,\rho_i)\neq \emptyset$ 
					for any $p\in P$ and $y_i\in M_i(p)$ with $\norm{y_i}>\rho_i$ there exists $z_i\in M_i(p)~\text{with}~\norm{z_i}\leq \rho_i~\text{satisfying}~u_i(z_i)\geq u_i(y_i)~\text{and}~ d(z_i,S^<_{-u_i}(y_i))\leq d(y_i,S^<_{-u_i}(y_i))$ (see (\ref{normaldef})). %In \cite[Theorem 2]{page}, the presence of equilibrium for pure exchange economy with unbounded consumption sets is shown under `no unbounded arbitrage' condition, which is equivalent to the compactness of $\mathcal{A}$ as per \cite[Theorem 1]{page}. 
					%\begin{remark}
					%	If we assume $X_i=\mathbb{R}^M_+$ for all $i$ as per \cite{donato}. Then $\mathcal{A}\subset \prod_{l\in \mathcal{L}} [0,\sum_{i\in \mathcal{I}} e_i^l]$ is compact and there exists a Walrasian equilibrium as per Corollary \ref{corollary_semistrictly}. 
					%	\end{remark}
				
				%If $u_i$ is non-satiable on $X_i$ then the map  $N^a_{-u_i}$ is locally upper sign-continuous as per \cite[Corollary 3.2(i)]{homidan}. However, it is clear from Example \ref{exappl} that the non-satiability of $u_i$ over $X_i$ is too strong to be assumed in place of local upper sign-continuity of $N^a_{-u_i}$. %in Theorem \ref{application}.
			\end{remark}
			Following example fulfils the assumptions of Theorem \ref{application} but \cite[Theorem 4.3]{donato} and \cite[Theorem 4.1]{cotrinatime} are not applicable as $X_i$ are unbounded from below.%\cite[Theorem 1]{page} are not satisfied.
			\begin{example} \label{exappl} Suppose $X_1=(-\infty,0]\times [0,\infty), X_2=[0,\infty)\times (-\infty,0]$ and $e_1=(-\frac{1}{2},1),e_2=(1,-\frac{1}{2})$. Let $u_i$ for $i=1,2$ be defined as,
				\begin{align*}	u_i(x_i^1,x_i^2)=\begin{cases}
						-(|x_i^1|+|x_i^2|),&~\text{if}~ |x_i^1|+|x_i^2|\leq \frac{3}{2},\\
						-\frac{3}{2},&~\text{otherwise}.
					\end{cases}
					%u_2(x_2^1,x_2^2)=\left\{\begin{array}{lll}
						%	-1,~\text{if}~ x_2^1\leq 1,x_2^2\leq 1,\\
						%	-\max\{x_2^1,x_2^2\},~\text{otherwise}.
						%\end{array}\right.
					\end{align*}
					We observe that \cite[Theorem 4.3]{donato} and \cite[Theorem 4.1]{cotrinatime} are not applicable as $X_i$ are unbounded from below and \cite[Theorem 1]{page} is not applicable as $\mathcal{A}=\{(x_1,x_2)\in X_1\times X_2\,|\,x_1+x_2=(\frac{1}{2},\frac{1}{2})\}$ is not compact and $u_i$ is not semi-strictly quasiconcave. However, $u_i$ is continuous, quasiconcave (refer \cite[Example 5.4]{ausselnormal}). We see $u_i$ fulfils Theorem \ref{application} $(a)$ and $(b')$ by taking $\rho_i>\frac{3}{2}$ and $z_i=e_i$. %fulfils the assumptions of Theorem \ref{application}. 
					Hence, we obtain an equilibrium $\bar{p}=(-\frac{1}{\sqrt{2}},-\frac{1}{\sqrt{2}})$, $\bar x_1=(0,\frac{1}{2})$ and $\bar x_2=(\frac{1}{2},0)$ as per Lemma \ref{equivalence} and Theorem \ref{application}.  
				\end{example}
				Following example depicts that there may be no equilibrium for an exchange economy with consumption sets unbounded from below, even if $u_i$ satisfy rest of the assumptions in \cite[Theorem 4.3]{donato}.
				\begin{example}\label{unbounded_ex}
					Suppose $\mathcal{I}=\mathcal{L}=\{1,2\}$ and $X_1=\{(x_1^1,x_1^2)\in \mathbb{R}^2\, |\,x_1^2\geq 0\},X_2=\mathbb{R}^2_+$. For agents $i\in \mathcal{I}$, suppose $e_i=(1,1)$ and $u_i:X_i\rightarrow \mathbb{R}$ are defined as $u_1(x^1_1,x^2_1)=x^2_1$ and $u_2(x^1_2,x^2_2)=x^1_2+x^2_2$.
					%Clearly $u_i$ are continuous semi-strictly quasiconcave and satisfy assumptions of \cite[Theorem 4.3]{donato}. But, there is no equilibrium for this exchange economy as $X_1$ is unbounded from below. Suppose $p=(p^1,p^2)$ 
					If $p^1>0$, we observe that agent $1$ will have unbounded income to spend on $x_1^2$ since $X_1$ is unbounded below in $x_1^1$. Further, agent $2$ will have unbounded demand for $x_2^1$ whenever $p^1\leq 0$. Hence, there is no equilibrium.%In fact, suppose $\lambda=\frac{\sqrt{17}-1}{4}$ then for any $p_1\in (0,\lambda)$ we observe $x_i=(\frac{2-p_1}{p_1},0)$ maximizes $u_i$ but $2x_i^1>\sum_{i\in I} e_i^1=2$. On the other hand, if we take $p_1\in (\lambda,1)$ then $x_i=(0,\frac{2-p_1}{1-p_1})$ maximizes $u_i$ but $2x_i^2>\sum_{i\in I} e_i^2=4$. Clearly, there is no equilibrium at $p_1=0,\lambda,1$ also.
				\end{example}
				If $u_i$ are continuously differentiable and concave then the following result can be derived by assuming $G_i(x_i)=\{-\nabla u_i(x_i)\}$ in Lemma \ref{equivalence} and Theorem \ref{application}.
				\begin{corollary}
					The given pure exchange economy admits equilibrium if in addition to assumption (A), $u_i$ is continuously differentiable concave function and there exists $\rho_i>0$ such that for any $p\in P$ and $y_i\in M_i(p)$ with $\norm{y_i}>\rho_i$ there exists $z_i\in M_i(p)$ with $\norm{z_i}\leq  \rho_i$ satisfying $u_i(z_i)\geq u_i(y_i)$ for each $i\in \mathcal{I}$.
				\end{corollary}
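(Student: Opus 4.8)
The plan is to deduce this statement directly from Theorem \ref{application} (and hence, through Lemma \ref{equivalence}, from Theorem \ref{theoremquasiproduct}) by exhibiting the single-valued selection $G_i(x_i)=\{-\nabla u_i(x_i)\}$ of the adjusted normal operator and checking that it inherits every hypothesis used there. First I would observe that, since $u_i$ is continuously differentiable, the map $x_i\mapsto -\nabla u_i(x_i)$ is continuous, hence upper semi-continuous with non-empty convex compact (singleton) values; together with non-satiability in assumption (A), which keeps $\nabla u_i$ from vanishing on the relevant domain, this yields that $G_i$ is locally upper sign-continuous and, being single-valued and continuous, dually lower semi-continuous over $co(M_i(P))$. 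Concavity of $u_i$ makes $-u_i$ convex, so $-\nabla u_i=\nabla(-u_i)$ is a monotone operator; monotonicity entails pseudomonotonicity and hence quasimonotonicity, which is exactly the generalized monotonicity required in hypothesis $(ii)$ of Theorem \ref{theoremquasiproduct}.

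Next I would verify the coercivity condition $(iii)$ of Theorem \ref{theoremquasiproduct} with $r^i_p=\rho_i$. For $y_i\in M_i(p)$ with $\norm{y_i}>\rho_i$, the hypothesis of the corollary supplies $z_i\in M_i(p)$ with $\norm{z_i}\leq\rho_i<\norm{y_i}$ and $u_i(z_i)\geq u_i(y_i)$. The first-order characterization of concavity, $u_i(z_i)\leq u_i(y_i)+\langle \nabla u_i(y_i),z_i-y_i\rangle$, then gives $\langle \nabla u_i(y_i),z_i-y_i\rangle\geq u_i(z_i)-u_i(y_i)\geq 0$, that is $\langle -\nabla u_i(y_i),z_i-y_i\rangle\leq 0$, which is precisely condition $(iii)$ for the selection $G_i$. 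Note that the differentiable structure lets us relax the strict inequality of hypothesis $(b)$ (or the extra distance requirement of Remark \ref{weakex}) to the plain inequality $u_i(z_i)\geq u_i(y_i)$. The remaining requirement $M_i(p)\cap \bar B(0,r_i)\neq\emptyset$ holds by taking $r_i>\max\{\norm{e_i},\rho_i\}$, exactly as in the proof of Theorem \ref{application}. Applying Theorem \ref{theoremquasiproduct} then produces $(\bar p,\bar y)$ solving the corresponding $QVI^*(\ref{eqB*})$, equivalently the problem $QVI(\ref{appl})$ built from the gradient selection.

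Finally I would close the loop through Lemma \ref{equivalence}. The decisive observation is that for the convex function $-u_i$ the gradient lies in the adjusted normal cone: any $v$ in the sublevel set $S_{-u_i(\bar y_i)}$ satisfies $u_i(v)\geq u_i(\bar y_i)$, so the concavity inequality $u_i(v)\leq u_i(\bar y_i)+\langle \nabla u_i(\bar y_i),v-\bar y_i\rangle$ forces $\langle -\nabla u_i(\bar y_i),v-\bar y_i\rangle\leq 0$ on $S^a_{-u_i}(\bar y_i)\subseteq S_{-u_i(\bar y_i)}$, whence $-\nabla u_i(\bar y_i)\in N^a_{-u_i}(\bar y_i)$. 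Because the star solution furnished by Theorem \ref{theoremquasiproduct} also guarantees $-\nabla u_i(\bar y_i)\neq 0$, the multiplier $-\nabla u_i(\bar y_i)$ is a valid non-trivial witness for the full operator $\prod_{i\in\mathcal{I}}N^a_{-u_i}\setminus\{0\}$; thus $(\bar p,\bar y)$ solves $QVI(\ref{appl})$ in the sense of Lemma \ref{equivalence}, Proposition \ref{prop_appl} applies verbatim, and $(\bar p,\bar y)$ is a Walrasian equilibrium. The one point genuinely requiring care is the non-degeneracy $\nabla u_i(\bar y_i)\neq 0$, needed both to make $G_i$ locally upper sign-continuous and to ensure the gradient selection delivers a non-zero element of the adjusted normal cone; this is exactly where non-satiability over $\mathcal{A}_i$ is essential, while everything else is a routine specialization of the arguments already established for Theorem \ref{application}.
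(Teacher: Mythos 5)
Your proposal is correct and takes essentially the same route as the paper, whose entire proof is the one-line instruction to substitute $G_i(x_i)=\{-\nabla u_i(x_i)\}$ into Lemma \ref{equivalence} and Theorem \ref{application}; you simply carry out that substitution in full, including the two computations the paper leaves implicit (concavity turning $u_i(z_i)\geq u_i(y_i)$ into the coercivity inequality, and the gradient landing in $N^a_{-u_i}(\bar y_i)$ so that Lemma \ref{equivalence} applies verbatim). The only soft spot, which the paper shares, is that non-satiability in assumption (A) is stated only over $\mathcal{A}_i$, so it rules out $\nabla u_i=0$ only there rather than on all of $co(M_i(P))$ as the local upper sign-continuity of the selection strictly requires.
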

				%\begin{remark}
				%	If the utility functions are continuously differentiable and concave then one can prove Theorem \ref{application} by assuming $G_i(x_i)=\{-\nabla u_i(x_i)\}$ and employing Theorem \ref{theorempseudo}.
				%\end{remark}
				\color{black}
				\subsection{Application to Convex GNEP}\label{application2}
				%The complications arising in computation of equilibrium for generalized Nash games motivated Facchinei et. al.\cite{facc} to study convex generalized Nash equilibrium problem (GNEP). 
				In this section, we will apply Theorem \ref{theoremalt} to ensure the occurrence of equilibrium for a jointly convex generalized Nash games (see \cite{facc,ausselGNEP,cotrinaGNEP} and references therein). Consider a non-cooperative game consisting of $N$-players. Suppose an $i^{th}$ player regulates a strategy variable $x_i\in \mathbb{R}^{n_i}$ where $\sum_{i=1}^{N} n_i=n$. Assume that the vector $x_{-i}\in \mathbb{R}^{n-n_i}$ consists the strategies of players other than $i^{th}$ player and $x=(x_{-i},x_i)\in \mathbb{R}^{n}$ denotes the strategy vector of all the involved players. For a given strategy $x_{-i}$ of rival players, the strategy set of an $i^{th}$ player is confined to, \begin{equation*}\label{Xi}
					X_i(x_{-i})=\{x_i\in \mathbb{R}^{n_i}|\,(x_{-i},x_i)\in X\}
				\end{equation*} 
				%It is observed that more often players use the common resources which leads to joint convex constraints for players. 
				where $X$ is a non-empty subset of $\mathbb{R}^{n}$. %representing the joint convex constraints of the involved players. %as they more often use the common resources together. 
				Suppose the strategy $x_{-i}$ of rivals is known, then the aim of $i^{th}$ player is to optimize his objective function $u_i(x_{-i},.):\mathbb{R}^{n-n_i}\times \mathbb{R}^{n_i}\rightarrow \mathbb{R}$ by selecting a suitable strategy $x_i\in X_i(x_{-i})$, that is, $i^{th}$ player solves following problem,
				$$P_i(x_{-i}): \quad \min_{x_i}~u_i(x_{-i},x_i)~\text{such that}~{x_i\in X_i(x_{-i})}.$$
				For a given strategy $x_{-i}$, suppose $Sol(x_{-i})$ denotes the solution set of the problem $P_i(x_{-i})$. Then $\bar x$ is an equilibrium vector for jointly convex generalized Nash equilibrium problem ($GNEP$) if $\bar x_i\in Sol(\bar x_{-i})$ for all $i\in \{1,2,\cdots,N\}$. \color{black} The normal cone operators $N^<_{u_i}$ are important tool to obtain a solution of $GNEP$ through VI whenever objective functions $u_i$ are quasiconvex with respect to $x_i$ (see \cite{cotrinaGNEP}). For any $(x_{-i},x_i)\in \mathbb{R}^{n}$, suppose $S^<_{u_i(x_{-i},\cdot)}(x_i)=\{y_i\in \mathbb{R}^{n_i}|\,u_i(x_{-i},y_i)<u_i(x_{-i},x_i)\}$. Then, 
				\begin{align*}
					N^<_{u_i}(x_{-i},x_i)=\{x_i^*\in \mathbb{R}^{n_i}|\,\langle x_i^*,y_i-x_i\rangle\leq 0,~\forall\,y_i\in S^<_{u_i(x_{-i},\cdot)}(x_i)\},
				\end{align*}
				$~\text{if}~S^<_{u_i(x_{-i},\cdot)}(x_i)\neq \emptyset$ and $N^<_{u_i}(x_{-i},x_i)=\mathbb{R}^{n_i}$ otherwise.
				%Let $T:\mathbb{R}^n\rightrightarrows \mathbb{R}^n$ be defined as:
				%\begin{align}\label{Ti}
				%T(x)=\prod_{i=1}^{N} T_i(x)~\text{where}~T_i(x)=
				%		co(N^<_{u_i}(x)\cap S_i(0,1)),	
				%	\end{align}
			\color{black}
			%with $\bar B_i(0,1)$ and $S_i(0,1)$ indicating closed unit ball and unit sphere in $\mathbb{R}^{n_i}$, respectively. %Following result acquaints us that some solutions of $GNEP$ can be obtained by solving an associated VI problem.
			%\begin{lemma} \cite[Proposition 3.3]{cotrinaGNEP}\label{lemmaGNEP}
			%	Suppose $X$ is a closed convex subset of $\mathbb{R}^n$. For each $i\in \{1,2,\cdots N\}$, suppose that $u_i:\mathbb{R}^{n-n_i}\times \mathbb{R}^{n_i}\rightarrow \mathbb{R}$ is continuous and quasi-convex with respect to $x_i$. %Suppose $\nabla_i u_i$ denotes the gradient of $u_i$ with respect to player's variable $x_i$.
			%	Then any solution of $VI(T,X)$ is a solution of $GNEP$ where $T:\mathbb{R}^n\rightarrow \mathbb{R}^n$ is defined as (\ref{Ti}).
			%	\end{lemma} 
		
		We apply Theorem \ref{theoremalt} to derive the existence of equilibrium for the given $GNEP$ by considering the set $X$ is (possibly) unbounded. 
		\begin{theorem}\label{theoremGNEP}
			Assume that $X\subseteq \mathbb{R}^n$ is a non-empty closed convex set. For any $i\in \{1,2,\cdots N\}$, suppose that the function $u_i:\mathbb{R}^{n-n_i}\times \mathbb{R}^{n_i}\rightarrow \mathbb{R}$ is quasiconvex with respect to $x_i$ and continuous for any $x\in \mathbb{R}^{n}$. Then the $GNEP$ consists a solution if there exists some $r'>0$ such that for each $y\in X$ with $\norm y>r'$, there exists $z\in X~\text{with}~\norm{z}<\norm{y}$ satisfying,
			\begin{align}\label{ineqGNEP1}
				u_i(y_{-i},z_i)< u_i(y_{-i},y_i),~\text{for each}~i \in \{1,2,\cdots N\}.%~\text{if}~y_i\notin argmin_{\mathbb{R}^{n_i}}u_i(y_{-i},\cdot),
			\end{align}
			%\begin{align*}
			%\text{for any}~x\in X\setminus \bar B(0,r')&,~\text{there exists}~y\in X~\text{with}~\norm{y}<\norm{x}~\text{s.t.}\\ \langle \nabla_i u_i(x), y_i-x_i\rangle &\leq 0,~\text{for each}~i \in \{1,2,\cdots N\}.
			%\end{align*} 
			
			%and $u_i(y_{-i},z_i)= u_i(y_{-i},y_i)~\text{otherwise}$.
		\end{theorem}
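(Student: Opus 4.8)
The plan is to reformulate the generalized Nash equilibrium as a variational inequality over the shared set $X$ and then invoke Theorem \ref{theoremalt}. The condition $\bar x_i\in Sol(\bar x_{-i})$ says that $\bar x_i$ minimizes the quasiconvex function $u_i(\bar x_{-i},\cdot)$ over $X_i(\bar x_{-i})$, which I would encode through the normal operator $N^<_{u_i}$ exactly as in Subsection \ref{application1}. Setting $G(x)=\prod_{i}N^<_{u_i}(x_{-i},x_i)$, I would first observe that, by joint convexity of $X$, any $\bar x\in X$ solving the variational inequality $\langle \bar x^*,z-\bar x\rangle\ge 0$ for all $z\in X$ (with $\bar x^*\in G(\bar x)$) is already an equilibrium: fixing $i$ and testing against $z=(\bar x_{-i},z_i)\in X$ with $z_i\in X_i(\bar x_{-i})$ isolates the $i$-th block and returns the optimality of $\bar x_i$. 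Such a variational inequality is precisely the instance of $QVI(\ref{eqA})$ in which the price component is taken trivial, $M\equiv X$ is constant and $g$ is any continuous map, so that the self-referential constraint collapses to the fixed set $X$.

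I would then check the hypotheses of Theorem \ref{theoremalt} for this instance. The constant map $M\equiv X$ is lower semi-continuous, closed and convex valued; $co(M(P))=X$; and $M(p)\cap \bar B(0,r)=X\cap\bar B(0,r)\neq\emptyset$ for large $r$. It remains to confirm that $G$ is upper semi-continuous with non-empty convex and compact values over $X$, which follows from the continuity and quasiconvexity of the $u_i$ together with the semicontinuity properties of the normal operators recorded in Subsection \ref{application1}.

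The decisive computation is the coercivity translation, showing that hypothesis (\ref{ineqGNEP1}) yields $\mathcal C(p)$ (here reducing to $(\mathcal C)$ since $M$ is constant). Indeed, if $y\in X$ with $\norm y>r'$ admits $z\in X$, $\norm z<\norm y$, with $u_i(y_{-i},z_i)<u_i(y_{-i},y_i)$ for every $i$, then $z_i\in S^<_{u_i(y_{-i},\cdot)}(y_i)$, so by definition of $N^<_{u_i}$ we obtain $\langle y_i^*,z_i-y_i\rangle\le 0$ for each $y_i^*\in N^<_{u_i}(y_{-i},y_i)$; summing over $i$ gives $\langle y^*,y-z\rangle\ge 0$ for all $y^*\in G(y)$, which is exactly the required inequality. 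Thus Theorem \ref{theoremalt} furnishes a solution $\bar x\in X$, and by the block-testing observation above $\bar x$ is an equilibrium.

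The main obstacle I anticipate is making the reformulation watertight in the purely quasiconvex, non-smooth setting. The operator $N^<_{u_i}$ is a conical (hence unbounded) map that contains the origin, so recovering minimality of $\bar x_i$ from the aggregated inequality requires a non-zero selection and an argument in the spirit of Proposition \ref{prop_appl} (using a triviality condition such as $(X_i(\bar x_{-i}))^{\perp}=\{0\}$ to exclude degenerate directions); simultaneously, fitting $G$ into the upper semi-continuous, convex and compact valued framework of Theorem \ref{theoremalt} calls for passing to a suitable compact base of each normal cone. The coercivity translation and the remaining continuity verifications are comparatively routine once the normal-cone machinery of Subsection \ref{application1} is available.
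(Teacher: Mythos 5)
Your strategy coincides with the paper's: reformulate the jointly convex GNEP as a variational inequality $VI(T,X)$ built from the normal operators of the quasiconvex objectives, view it as the degenerate instance of $QVI(\ref{eqA})$ with $g\equiv 0$ and $M\equiv X$ constant, translate hypothesis (\ref{ineqGNEP1}) into the coercivity condition $\mathcal C(p)$, and invoke Theorem \ref{theoremalt}. Your coercivity computation — $z_i\in S^<_{u_i(y_{-i},\cdot)}(y_i)$, hence $\langle y_i^*,z_i-y_i\rangle\le 0$ for all $y_i^*\in N^<_{u_i}(y_{-i},y_i)$, summed over $i$ — is exactly the argument in the paper, and it survives unchanged when the cones are shrunk to compact bases.

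The two obstacles you flag at the end are real, and the paper resolves both of them in one stroke rather than leaving them open: it does not take $G(x)=\prod_i N^<_{u_i}(x_{-i},x_i)$ (which is cone-valued, hence never compact-valued and containing $0$, so Theorem \ref{theoremalt}(ii) fails as you suspect), but instead sets $T_i(x)=co\bigl(N^<_{u_i}(x)\cap S_i(0,1)\bigr)$, the convex hull of the intersection of the normal cone with the unit sphere. This is precisely the ``compact base'' you anticipate needing; upper semi-continuity with non-empty convex compact values and the implication ``$\bar x$ solves $VI(T,X)$ $\Rightarrow$ $\bar x$ is an equilibrium'' are then imported wholesale from \cite[Propositions 3.2 and 3.3]{cotrinaGNEP}, so no analogue of Proposition \ref{prop_appl} or of a condition $(X_i(\bar x_{-i}))^{\perp}=\{0\}$ has to be reproved here. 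As written, your proof is incomplete at exactly those two points — block-testing with a possibly zero multiplier does not recover minimality for a quasiconvex objective, and the raw normal-cone product cannot be fed into Theorem \ref{theoremalt} — but since you correctly diagnose both issues and the fixes are the standard ones from the cited reference, the gap is one of execution rather than of idea.
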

		\begin{proof}
			Suppose $T:\mathbb{R}^n\rightrightarrows \mathbb{R}^n$ is defined as $T(x)=\prod_{i=1}^{N} T_i(x)=\prod_{i=1}^{N}co(N^<_{u_i}(x)\cap S_i(0,1))$.
			To prove the occurrence of equilibrium for given $GNEP$, it is enough to show that $VI(T,X)$ admits a solution (see \cite[Proposition 3.3]{cotrinaGNEP}). We observe that $VI(T,X)$ is a particular instance of $QVI(\ref{eqA})$. In fact, one can assume that $P$ is an arbitrary non-empty, convex and compact subset of $\mathbb{R}^m$, $g:\mathbb{R}^n\rightarrow \mathbb{R}^m$ is zero function, the maps $M:P\rightrightarrows \mathbb{R}^n$ and $G:\mathbb{R}^n\rightrightarrows\mathbb{R}^n$ are, respectively, defined as $M(p)=X$ and $G(x)=T(x)$ in $QVI(\ref{eqA})$.

			We aim to show that the maps $G$ and $M$ fulfil the assumptions of Theorem \ref{theoremalt}. Clearly, the map $T$ defined is upper semi-continuous with non-empty convex and compact values (see \cite[Proposition 3.2]{cotrinaGNEP}) and $M$ is a constant map. We claim that the coercivity condition $\mathcal{C}(p)$ is satisfied for any $p\in P$. \color{black}	Suppose $r_p=r'$ for a given $p\in P$, then due to our hypothesis for any $y\in X \setminus \bar B(0,r')$ there exists $z\in X$ with $\norm z<\norm y$ satisfying $u_i(y_{-i}, z_i)< u_i(y_{-i},y_i)$ %if $y_i\notin argmin_{\mathbb{R}^{n_i}} u_i(y_{-i},\cdot)$. 
			for each $i \in \{1,2,\cdots N\}$. Hence, we obtain $z_i\in S^<_{u_i(y_{-i},\cdot)}(y_i)$ and $\langle y^*_i, z_i-y_i\rangle\leq 0~\text{for all}~y^*_i\in N^<_{u_i}(y_{-i},y_i)\supset T_i(y)$. Finally,
			\begin{equation*}
				\langle y^*, y-z\rangle=\sum_{i=1}^{N}	\langle y^*_i, y_i-z_i\rangle\geq 0~\text{for all}~y^*\in T(y).
			\end{equation*}
			\color{black}%then clearly $z_i=y_i$ and $\langle y^*_i, z_i-y_i\rangle= 0~\text{for all}~y^*_i\in T_i(y)$. %If $y_i\notin argmin_{\mathbb{R}^{n_i}} u_i(y_{-i},\cdot)$,  due to semi-strict quasi-convexity of the function $u_i(y_{-i},.):\mathbb{R}^{n_i}\rightarrow \mathbb{R}$ (see (1) in \cite{ausselGNEP}). Hence,
			%Suppose (b) holds true and $r_p=r'$ for a given $p\in P$, then due to our hypothesis for any $y\in X \setminus \bar B(0,r')$ there exists $z\in X$ with $\norm z<\norm y$ satisfying $u_i(y_{-i}, z_i)\leq u_i(y_{-i},y_i)$ for each $i \in \{1,2,\cdots N\}$. This fact, along with the differentiability and convexity of the function $u_i(y_{-i},.):\mathbb{R}^{n_i}\rightarrow \mathbb{R}$ further leads us to the conclusion,
			%$$\langle T(y), z-y\rangle =\sum_{i=1}^{N}\langle \nabla_{x_i} u_i(y), z_i-y_i\rangle\leq 0.$$
			%Conclusively, $\langle y^*,z-y\rangle\leq 0$ for all $y^*\in T(y)$.
			Evidently, there exists $r>r'$ such that $X\cap \bar B(0,r)\neq \emptyset$.
			%Furthermore, we claim that there exists $r>r'$ such that $X\cap \bar B(0,r)\neq \emptyset$. If $X\cap \bar B(0,r')\neq \emptyset$ then the claim follows trivially. In the case that $X\cap \bar B(0,r')=\emptyset$, we get some $y\in X\setminus \bar B(0,r')$ due to non-emptiness of the set $X$. According to hypothesis, there exist some $z\in X$ with $\norm z<\norm y$. Hence, on taking $r>\max\{r',\norm z\}$ we observe that $X\cap \bar B(0,r)\neq \emptyset.$
			Finally, the conditions required in Theorem \ref{theoremalt} are satisfied and $VI(T,X)$ admits a solution.
		\end{proof}
		\color{black}
		If $u_i$ are convex with respect to $x_i$ and continuously differentiable then we can derive following result by assuming $T(x)=\prod_{i=1}^{N} \{{\nabla_{i}} u_i (x_{-i},x_i)\}$ in Theorem \ref{theoremGNEP}.
		\begin{corollary}\label{coroGNEP}
			Assume that $X\subseteq \mathbb{R}^n$ is a non-empty closed convex set. For any $i\in \{1,2,\cdots N\}$, suppose that the function $u_i:\mathbb{R}^{n-n_i}\times \mathbb{R}^{n_i}\rightarrow \mathbb{R}$ is convex with respect to $x_i$ and continuously differentiable for any $x\in \mathbb{R}^{n}$. Then the $GNEP$ consists a solution if there exists some $r'>0$ such that for each $y\in X$ with $\norm y>r'$, there exists $z\in X~\text{with}~\norm{z}<\norm{y}$ satisfying,
			%\begin{align*}
			%\text{for any}~x\in X\setminus \bar B(0,r')&,~\text{there exists}~y\in X~\text{with}~\norm{y}<\norm{x}~\text{s.t.}\\ \langle \nabla_i u_i(x), y_i-x_i\rangle &\leq 0,~\text{for each}~i \in \{1,2,\cdots N\}.
			%\end{align*} 
			\begin{equation*}
				\langle \nabla_i u_i(y),y_i-z_i\rangle\geq 0~\text{for each}~i \in \{1,2,\cdots N\}.
			\end{equation*}			
		\end{corollary}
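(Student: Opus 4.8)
The plan is to obtain Corollary \ref{coroGNEP} as a direct specialization of Theorem \ref{theoremGNEP}, replacing the normal cone operator by the single-valued partial gradient map. Concretely, I would set $T:\mathbb{R}^n\rightrightarrows \mathbb{R}^n$ to be $T(x)=\prod_{i=1}^N\{\nabla_i u_i(x_{-i},x_i)\}$, take $M\equiv X$ as the constant map, $g\equiv 0$, and let $P$ be an arbitrary non-empty convex compact subset of $\mathbb{R}^m$, exactly as in the proof of Theorem \ref{theoremGNEP}. Since each $u_i$ is continuously differentiable, the map $x\mapsto \nabla_i u_i(x)$ is continuous, so $T$ is a single-valued continuous map; in particular $T$ is upper semi-continuous and admits non-empty, convex, compact (singleton) values on $X$, and the constant map $M$ trivially satisfies the hypotheses of Theorem \ref{theoremalt}.

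First I would justify that a solution of $VI(T,X)$ yields a $GNEP$ equilibrium. If $\bar x$ solves $VI(T,X)$, then $\sum_{i=1}^N\langle \nabla_i u_i(\bar x),x_i-\bar x_i\rangle\geq 0$ for all $x\in X$. For the jointly convex game, choosing the feasible perturbation $x=(\bar x_{-i},x_i)$ with $x_i\in X_i(\bar x_{-i})$ isolates the $i$-th inequality $\langle \nabla_i u_i(\bar x),x_i-\bar x_i\rangle\geq 0$ for every $x_i\in X_i(\bar x_{-i})$. Because $u_i(\bar x_{-i},\cdot)$ is convex and differentiable, this first-order condition is both necessary and sufficient for $\bar x_i\in Sol(\bar x_{-i})$, so $\bar x$ is an equilibrium of the $GNEP$. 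This is the differentiable-convex counterpart of the reduction used in Theorem \ref{theoremGNEP} via \cite[Proposition 3.3]{cotrinaGNEP}.

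Next I would verify the coercivity condition $\mathcal{C}(p)$. Since $M(p)=X$ for every $p\in P$, fix $r_p=r'$. For any $y\in X$ with $\norm{y}>r'$, the hypothesis supplies $z\in X$ with $\norm{z}<\norm{y}$ and $\langle \nabla_i u_i(y),y_i-z_i\rangle\geq 0$ for each $i$. Writing $y^*=T(y)$ for the unique value, summation over $i$ gives $\langle y^*,y-z\rangle=\sum_{i=1}^N\langle \nabla_i u_i(y),y_i-z_i\rangle\geq 0$, which is precisely $\mathcal{C}(p)$. Moreover, since $X$ is non-empty, one may choose $r>r'=\sup\{r_p\,|\,p\in P\}$ with $X\cap\bar B(0,r)\neq\emptyset$. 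All hypotheses of Theorem \ref{theoremalt} are then met, so $VI(T,X)$ admits a solution, which by the previous step is a $GNEP$ equilibrium.

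The main obstacle is the reduction step rather than the coercivity check: one must confirm that the concatenated gradient variational inequality over the shared constraint set $X$ correctly encodes the simultaneous first-order conditions of all $N$ players and, crucially, that convexity of each $u_i(x_{-i},\cdot)$ upgrades these stationarity conditions to genuine optimality. The coercivity verification is immediate because the corollary's assumption is already phrased in terms of the gradient inner products $\langle \nabla_i u_i(y),y_i-z_i\rangle$, so it maps onto $\mathcal{C}(p)$ without any further estimate.
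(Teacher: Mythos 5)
Your proposal is correct and follows exactly the route the paper intends: the paper's ``proof'' of Corollary \ref{coroGNEP} is the single remark that one substitutes $T(x)=\prod_{i=1}^{N}\{\nabla_i u_i(x_{-i},x_i)\}$ into Theorem \ref{theoremGNEP}, and you have simply filled in the details (continuity of the gradients giving upper semi-continuity with singleton convex compact values, the coercivity check, and the standard first-order argument that a solution of $VI(T,X)$ is a variational, hence generalized Nash, equilibrium under convexity of $u_i(x_{-i},\cdot)$). No gaps.
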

		\color{black}
		\begin{remark}
			\begin{itemize}
				\item[(i)] We can obtain \cite[Proposition 2.2]{facc} and \cite[Theorem 4.2]{ausselGNEP} as corollary of Theorem \ref{theoremGNEP} by assuming the set $X$ is bounded.
				\color{black}
				%\item[(ii)] In (a) if $u_i$ is semi-strictly quasi-convex and $S^<_{u_i(y_{-i},.)}(y_i)\neq \emptyset$ for any $y\in X$ with $\norm{y}>r'$ then we can weaken (\ref{ineqGNEP1}) to $u_i(y_{-i},z_i)\leq u_i(y_{-i},y_i)~\text{for each}~i$. 
				%\item[(ii)] It can noted that \cite[Theorem 2.1]{aussel-hadj} is not applicable to prove Theorem \ref{theoremGNEP} because the corresponding $T$ need not be quasimonotone. For example, let $u_i: \mathbb{R}^2\rightarrow \mathbb{R}$ be $u_i(x_{-i},x_{i})=x_ix_{-i}$ for $i=1,2$ then $u_i$ are semi-strictly quasiconvex with respect to $x_i$. %but the map $T:\mathbb{R}^2\rightrightarrows \mathbb{R}^2$ becomes,
				%\begin{align}	
				%	    T(x_1,x_2)=\begin{cases}
					%		[-1,1]\times[-1,1],~& x_1=x_2=0,\\
					%		\{1\}\times [-1,1] ,~& x_1\neq 0,x_2=0,\\
					%		[-1,1]\times\{1\},~& x_1=0,x_2\neq 0,\\
					%		\{1\}\times \{1\},~& \text{otherwise}.
					%	\end{cases}
				%\end{align}
				%By taking $v=(-1,1), v^*=(1,-1)\in T(v)$ and $w=(1,-1), w^*=(-1,1)\in T(w)$, we observe $T$ is not quasimonotone. %as $\langle v^*,w-v\rangle>0$ but $\langle w^*,w-v\rangle<0$.
				%Suppose $u_i(x_{-i},x_{i})=x_i x_{-i}+x_{-i}^2$ then $u_i$ is convex with respect to $x_i$. But, $T(w_1,w_2)=\{(w_2,w_1)\}$ is not quasimonotone. %at $v=(1,0)$ and $w=(0,1)$.
				\item[(ii)] It can be noted that \cite[Theorem 1]{blum} is not applicable to derive Theorem \ref{theoremGNEP}. Let $\psi$ be Nikaido-Isoda function \cite{Nikaido} defined as $\psi(x,y)=\sum_{i=1}^{N} [u_i(x_{-i},x_i)-u_i(x_{-i},y_i)].$ Suppose $X$ and $u_i$ are,
				$$X=\{(x_1,x_2)\in \mathbb{R}^2|\,x_1\geq 0, 3x_2+4\geq (x_1)^2\}~\text{and}~u_i(x_{-i},x_i)=x_{-i}(x_i)^3,$$ for $i=1,2$. We observe that neither $-\psi$ is monotone nor $-\psi(x,\cdot)$ is convex function as assumed in \cite[Theorem 1 (iii)]{blum}. %Hence, \cite[Theorem 1(iii)]{blum} is not applicable. 
				However, the conditions given in Theorem 4.9 are satisfied and $\bar x=(1,-1)$ forms an equilibrium for this game.
				\item[(iii)] Although the jointly convex $GNEP$ is a particular case of GNEP, we observe Corollary \ref{coroGNEP} is independent of \cite[Theorem 5]{ausselcoer}. %In fact, the sets $C_i=$ projection of set $X$ on ${\mathbb{R}^{n_i}}$ (as per \cite{rosen}) and the maps $X_i:C_{-i}\rightrightarrows C_i$ defined as (\ref{Xi}) need not satisfy the assumptions of \cite[Theorem 5]{ausselcoer} whenever they satisfy the conditions of Theorem \ref{theoremGNEP}(b). %On comparing our coercivity condition in Theorem 4.8 (b) with [11,Theorem 5] we observe that if there exists $\rho>0$ such that $\prod_{i=1}^N X_i(x_{-i})\cap \bar B(0,\rho)\neq \emptyset$ for all $x\in C$ then $X\cap \bar B(0,\rho)\neq \emptyset$. %On contrary, suppose $X\cap \bar B(0,\rho)=\emptyset$. Then, $\norm{y}>\rho$ for any $y\in X$. We pick an element $x\in X$ such that $\norm{x}\leq \norm{y}$ for all $y\in X$. Suppose $z\in \prod_{i=1}^N X_i(x_{-i})\cap \bar B(0,\rho)$ then $(x_{-i},z_i)\in X$ for any $i$ and $\norm{(x_{-i},z_i)}\geq \norm{x}$ for any $i\in I$. This, shows $\norm{z}\geq\norm{x}>\rho$ and we get contradiction to the fact $z\in \bar B(0,\rho)$. Hence, $X\cap \bar B(0,\rho)\neq \emptyset$.But converse need not be true. 
				Suppose $X$ and $u_i$ are, $$X=\{(x_1,x_2)\in \mathbb{R}^2\,|\,x_1>0,x_2>0,x_1x_2\geq 1\}~\text{and}~ u_i(x_{-i},x_i)=x_i$$ for $i=1,2$. We observe $C_i=$ projection of set $X$ on ${\mathbb{R}}=(0,\infty)$ is not closed as assumed in [11,Theorem 5]. By taking $x_n=(\frac{1}{n},\frac{1}{n})$, we %obtain $X_1(x_2)=\{x_1\in \mathbb{R}|\,x_1\geq n\}$ and $X_2(x_1)=\{x_2\in \mathbb{R}|\,x_2\geq \frac{1}{n}\}$. 
				observe $\{x_n\}_{n\in \mathbb{N}}\subset C_1\times C_2$ and %$\prod_{i=1}^2 X_i(x^n_{-i})=[n,\infty)\times[{n},\infty)$ and 
				there is no such $\rho>0$ which satisfies the assumption $\prod_{i=1}^N K_i(x^n_{-i})\cap \bar B(0,\rho)\neq \emptyset$ for all $n$ as assumed in \cite[Theorem 5]{ausselcoer}. However, $u_i$ and $X$ satisfy the assumptions of Corollary \ref{coroGNEP} and $\bar x=(1,1)$ forms an equilibrium. % $\prod_{i=1}^2 X_i(x_{-i})\cap \bar B(0,\rho)\neq \emptyset$ for each $x\in X$ in [11,Theorem 5]. 
				%Hence, one can not apply [11,Theorem 5] to show the existence of equilibrium for convex GNEP with $X=\{(x_1,x_2)\,|\,x_1>0,x_2>0,x_1,x_2\geq 1\}$. 
				%We observe $X$ satisfies the assumptions of Theorem \ref{theoremGNEP}.
				\item[(iv)] Following example shows that Theorem \ref{theoremGNEP} is independent of \cite[Theorem 2]{castellani}. % is not satisfied but the conditions in Theorem 4.9 (previously Theorem 4.6) are satisfied. 
				Suppose $X$ and $C_i$ are same as (iv). Consider $u_i$ as,
				\begin{align*}
					u_i(x_{-i},x_i)=\begin{cases}
						|x_{-i}|+|x_i|,&~\text{if}\,|x_{-i}|+|x_i|\leq 3,\\
						3,&~\text{if}\,|x_{-i}|+|x_i|> 3
					\end{cases}
				\end{align*} 
				for $i=1,2$. By taking $x=(x_{-i},x_i)=(1,3)$ and $z=(x_{-i},z_i)=(1,1)$, we observe that $z_i\in F_i(x)=S^<_{u_i(x_{-i},\cdot)}(x_i)\cap C_i$ but $z_i-x_i\notin R(F_i(x),x_i)$ and \cite[Theorem 2 (i)]{castellani} is not satisfied. However, all the assumptions of Theorem \ref{theoremGNEP} are satisfied and $\bar x=(1,1)$ forms an equilibrium. 
				\color{black}
			\end{itemize}
		\end{remark}
		
		\section*{Acknowledgement(s)}
		The first author acknowledges Science and Engineering Research Board $\big(\rm{MTR/2021/000164}\big)$, India for the financial support. The second author is grateful to the University Grants Commission (UGC), New Delhi, India for the financial assistance provided by them throughout this research work under the registration number: $\big(\rm{1313/(CSIRNETJUNE2019)}\big)$. \color{black} We are grateful to anonymous referees for their invaluable suggestions, which helped us to improve the original manuscript.\color{black}
		

\begin{thebibliography}{99}
			\bibitem{stampacchia} Stampacchia~G. Variational inequalities. In: Ghizzetti~A, editor. Theory and applications of monotone operators. Gubbio (Italy): Edizioni Oderisi; 1968.
			\bibitem{chan} Chan~D, Pang~JS. The generalized quasi-variational inequality problem. Math Oper Res. 1982;7:211--222. 
			\bibitem{ausselQVI} Aussel~D, Cotrina~J. Quasimonotone quasi-variational inequalities: Existence results and applications. J Optim Theory Appl. 2013;158:637--652. 
			\bibitem{ausselstra} Aussel~D, Sagratella~S. Sufficient conditions to compute any solution of a quasivariational inequality via a variational inequality. Math Methods Oper Res. 2017;85:3--18.
			\bibitem{facc} Facchinei~F, Fischer~A, Piccialli~V. On generalized Nash games and variational inequalities. Oper Res Lett. 2007;35:159--164.
			\bibitem{donato} Donato~MB, Milasi~M, Vitanza~C. Variational problem, generalized convexity, and application to a competitive equilibrium problem. Numer Funct Anal Optim. 2014;35:962--983.
			\bibitem{milasi} Donato~MB, Milasi~M, Vitanza~C. On the study of an economic equilibrium with variational inequality arguments. J Optim Theory Appl. 2016;168:646--660.
			\bibitem{wets} Jofr{\'e}~A, Rockafellar~RT, Wets~RJB. Variational inequalities and economic equilibrium. Math Oper Res. 2007;32:32--50.
			\bibitem{tan} Tan~NX. Quasi-variational inequality in topological linear locally convex Hausdorff space. Math Nachr. 1985;122:231--245.
			\bibitem{aussel-cotr} Aussel~D, Cotrina~J. Stability of quasimonotone variational inequalities under sign-continuity. J Optim Theory Appl. 2013;158:653--667. 
			\bibitem{ausselcoer} Aussel~D, Sultana~A. Quasi-variational inequality problems with non-compact valued constraint maps. J Math Anal Appl. 2017;456:1482--1494. 
			\bibitem{ausselradn} Aussel~D, Donato~MB, Milasi~M, et al. Existence Results for Quasi-variational Inequalities with Applications to Radner Equilibrium Problems. Set-Valued Var Anal. 2021;29:931--948.
			\bibitem{aubin} Aubin~JP. Mathematical methods of game and economic theory. Mineola (NY): Courier Corporation; 2007.
			\bibitem{kurt} Kuratowski~K. Topology. Vol. 1. New York (NY): Academic Press; 1966.
			\bibitem{hadji} Hadjisavvas~N. Continuity and maximality properties of pseudomonotone operators. J Conv Anal. 2003;10:459--469.
			\bibitem{aussel-hadj} Aussel~D, Hadjisavvas~N. On quasimonotone variational inequalities. J Optim Theory Appl. 2004;121:445--450. 
			\bibitem{saigal} Saigal~R. Extension of the generalized complimentarity problem. Math Oper Res. 1976;1:260--266.
			\bibitem{bianchi} Bianchi~M, Hadjisavvas~N, Schaible~S. Minimal coercivity conditions and exceptional families of elements in quasimonotone variational inequalities. J Optim Theory Appl. 2004;122:1--17. 
			\bibitem{ausselnormal} Aussel~D. New developments in quasiconvex optimization. In: Al-Mezel~SAR, Al-Solamy~FRM, Ansari~QH, editors. Fixed Point Theory, Variational Analysis and Optimization. Boca-Raton (Florida): CRC Press; 2014. %p. 173--205.
			\bibitem{aussel-hadjnormal} Aussel~D, Hadjisavvas~N. Adjusted sublevel sets, normal operator, and quasi-convex programming. SIAM J. Opt. 2005;16:358--367.
			\bibitem{collel} Mas-Collel~A, Whinston~MD, Green~JR. Microeconomic Theory. New York(NY): Oxford University Press; 1995. 
			\bibitem{homidan} Al-Homidan~S, Hadjisavvas~N, Shaalan~L. Transformation of quasiconvex functions to eliminate local minima. J Optim Theory Appl. 2018;177:93--105.
			\bibitem{lsc}Kamenskii~MI, Obukhovskii~VV, Zecca~P. Condensing multivalued maps and semilinear differential inclusions in Banach spaces. Berlin (NY): Walter de Gruyter; 2001.
			\color{black}
			\bibitem{ausselGNEP} Aussel D, Dutta J. Generalized Nash equilibrium problem, variational inequality and quasiconvexity. Oper Res Lett. 2008;36:461--464.
			\bibitem{ausselprod} Aussel D, Van KC, Salas D. Quasi-variational Inequality Problems over Product sets with Quasi-monotone Operators. SIAM J Opt. 2019;29:1558--1577. 
			\bibitem{blum} Blum E, Oettli W. From Optimization and Variational Inequalities to Equilibrium Problems, Math Stud. 1993;63:1--23. 
			\bibitem{cotrinaGNEP} Bueno O, Calder{\'o}n C, Cotrina J. A note on jointly convex Nash games. Optimization. 2023;1--13.
			\bibitem{castellani} Castellani M, Giuli M. A coercivity condition for nonmonotone quasiequilibria on finite-dimensional spaces. J Global Optim. 2019;75:163--176.
			\color{black}\bibitem{cotrinatime} Cotrina J, Z{\'u}{\~n}iga J. Time-dependent generalized Nash equilibrium problem. J Optim Theory Appl. 2018;179:1054--1064.
			\bibitem{hart} Hart OD. On the existence of equilibrium in a securities model. J Econ Theory. 1974;9:293--311.
			\bibitem{Nikaido} Nikaido H, Isoda K. Note on noncooperative convex games, Pac J Math. 1955;5:807--815.
			\bibitem{page} Page Jr. FH, Wooders MH. A necessary and sufficient condition for the compactness of individually rational and feasible outcomes and the existence of an equilibrium. Econ Letters. 1996;52:153--162.
			\bibitem{won-yannelis} Won DC, Yannelis NC. Equilibrium theory with unbounded consumption sets and non-ordered preferences: Part I. Non-satiation. J Math Econ. 2008;44:1266--1283.
			%	\bibitem{won-yannelis2} Won DC, Yannelis NC. Equilibrium theory with satiable and non-ordered preferences. J Math Econ. 2011;47, 245--250.
			%\bibitem{allouch} Allouch N. An equilibrium existence result with short selling. J Math Econ. 2002;37,81--94. 
			
			%	\bibitem{rosen} Rosen JB, Existence and Uniqueness of Equilibrium Points for Concave N-Person Games, Econometrica 1965;33,520--534.
			\color{black}
		\end{thebibliography}
	\end{document}